\documentclass[10pt,a4paper]{article}
\usepackage[latin1]{inputenc}
\usepackage{fullpage}
\usepackage{mathrsfs}
\usepackage{amsmath}
\usepackage{amsfonts}
\usepackage{amssymb}
\usepackage{amsthm}
\usepackage{graphicx}
\usepackage{epstopdf}
\usepackage[affil-it]{authblk}
\graphicspath{C:/Users/evanc/OneDrive/Documents/The distributional differintegral}
\DeclareGraphicsExtensions{.eps,.pdf,.png,.jpg,.svg}
\author{Evan Camrud%
	\thanks{Electronic address: \texttt{ecamrud@iastate.edu}; Corresponding author} }
\affil{Mathematics Department, Iowa State University, Ames, IA, 50010}
\title{Fractional operators via analytic interpolation of integer powers}
\newtheorem{Lemma}{Lemma}
\newtheorem{Theorem}{Theorem}
\newtheorem{AlphTheorem}{Theorem}

\theoremstyle{definition}
\newtheorem{Definition}{Definition}
\newtheorem{Remark}{Remark}
\newtheorem{Proposition}{Proposition}
\newtheorem{Corollary}{Corollary}

\newcommand{\eqn}[1]{\begin{displaymath}\begin{split}#1\end{split}\end{displaymath}}
\newcommand{\eqnn}[1]{\begin{equation}\begin{split}#1\end{split}\end{equation}}
\newcommand{\R}{\mathbb{R}}
\newcommand{\Z}{\mathbb{Z}}
\newcommand{\C}{\mathbb{C}}
\newcommand{\N}{\mathbb{N}}

\newcommand{\sinc}{\textnormal   {sinc}}

\newcommand\blfootnote[1]{%
  \begingroup
  \renewcommand\thefootnote{}\footnote{#1}%
  \addtocounter{footnote}{-1}%
  \endgroup
}

\begin{document}

	\maketitle
	
	\blfootnote{\textbf{Keywords:} Functional calculus, fractional calculus, interpolation theory, Dirichlet series.\\
	\textbf{MSC:} 47A60, 26A33, 65D05, 11M06, 42A38, 42A15.}

	\textbf{Abstract.} Although the study of functional calculus has already established necessary and sufficient conditions for operators to be fractionalized, this paper aims to use our well-conceived notion of integer powers of operators to construct non-integer powers of operators. In doing so, we not only provide a more intuitive understanding of fractional theories, but also provide a framework for producing new fractional theories. Such interpolations allow one to approximate fractional powers by finite sums of integer powers of operators, and thus may find much use in numerical analysis of fractional PDEs and the time-frequency analysis of the fractional Fourier transform. Further, these results provide an interpolation procedure for the Riemann zeta function.
	
\section{Introduction and motivation}

The transition between discrete and continuous has been an elucidating process in mathematics for several centuries. Whether in the discovery of calculus, or the creation of the real numbers through Dedekind cuts, we have always wondered ``What if we could take infinitely small steps instead?''

Recently, fractional calculus has become a useful tool in studying systems with memory effects \cite{Diethelm}, \cite{Hilfer1}, \cite{Barbosa}; the fractional Fourier transform is often used in time-frequency analysis and digital signal processing \cite{Almeida}, \cite{OzaktasEtAl}, \cite{Soo-Chang}; the theory of pseudo-differential operators has paved the way for modern regularity theory \cite{Hormander}, \cite{Herzog}; the Mellin transform lies at the heart of special function theory and the Riemann hypothesis \cite{Titchmarsh}, and semigroup theory allows one to efficiently solve the abstract Cauchy problem \cite{Hille}, \cite{Balakrishnan}.

Our human minds naturally understand integer powers. Indeed, the mythical execution of Plato's pupil, insisting that the length of the diagonal on a right isoceles triangle was not a rational number, shows just how difficult non-integer powers are to comprehend. Because of this, many of the above-noted fractional powers of operators are constructed in a contrived manner.

Although the study of functional calculus has already established necessary and sufficient conditions for operators to be fractionalized \cite{Komatsu}, \cite{Balakrishnan}, \cite{PrussAndSohr}, this paper aims to use our well-conceived notion of integer powers of operators to construct non-integer powers of operators. In doing so, we not only provide a more intuitive understanding of fractional theories, but also provide a framework for producing new fractional theories.

Let it be noted that fractional powers of linear operators should naturally satisfy
\begin{enumerate}
\item Linearity: $T^\alpha[f+g]=T^\alpha[f]+T^\alpha[g]$
\item Semigroup property: $T^\alpha T^\beta=T^{\alpha+\beta}$
\item Commutivity: $T^\alpha T^\beta=T^\beta T^\alpha$
\item Associativity: $T^\alpha (T^\beta T^\gamma)=(T^\alpha T^\beta)T^\gamma$
\item Integer orders: $\lim_{\alpha\to n}T^\alpha=T^n$ and $\lim_{\alpha\to 0}T^\alpha=T^0=\text{Id}$
\item (Inverses when applicable:) $(T^\alpha)^{-1}=T^{-\alpha}$.
\end{enumerate}
One may quickly notice that if the semigroup property holds for all applicable $\alpha,\beta$, the next four cases follow by the properties of addition. Indeed, the satisfaction of these criteria motivated the construction of fractional powers by interpolation schema. This is because the semigroup property naturally arises in interpolations, as it is inherited from the additivity of integer powers. Linearity follows naturally, as well, from uniform convergence of the series.

By the very nature of this paper, it is necessary to note that we take the principal branch when working with fractional powers (or logarithms).

The manuscript is ordered as follows: in section 2, the two natural interpolation strategies (Newton series, and Shannon interpolation) are introduced and their sufficient conditions for convergence are considered; in section 3, the fractional powers of operators are defined via the two interpolation strategies; in section 4, applications of the fractional powers are given in terms of fractional calculus, Dirichlet series (Mellin transform and the Riemann zeta function), the fractional Fourier transform, and semigroup theory.

\section{Interpolation strategies}

\subsection{Newton series}
Let us first observe the power series, centered at $1$, for the function $x^\alpha$:
\eqn{x^\alpha=\sum_{n=0}^\infty \binom{\alpha}{n}(x-1)^n=\sum_{n=0}^\infty \frac{\Gamma(1+\alpha)}{\Gamma(1+\alpha-n)\Gamma(1+n)}(x-1)^n.}
It is immediately clear that for $\alpha\notin \Z$, the radius of convergence of the power series is $1$. This is due to the presence of the branch cut at $0$.

Supposing that we want to define this series instead on monomials centered at $0$ we utilize the binomial theorem to recover
\eqn{x^\alpha&=\sum_{n=0}^\infty \binom{\alpha}{n}\bigg[\sum_{k=0}^n\binom{n}{k}(-1)^{n-k} x^k\bigg]=\sum_{n=0}^\infty \bigg[\sum_{k=0}^n(-1)^k\binom{n}{k} x^k\bigg](-1)^n\binom{\alpha}{n}.}

Written as such, the following definition seems straightforward.

\begin{Definition}
The \textbf{$n^{th}$ Pochhammer-Newton polynomial} is given by
\eqn{P_n(\alpha)=(-1)^n\binom{\alpha}{n}=\frac{(-1)^n}{\Gamma(n+1)}\prod_{m=0}^{n-1}(\alpha-m)=\frac{(-1)^n \Gamma(\alpha+1)}{\Gamma(\alpha-n+1)\Gamma(n+1)}.}
\end{Definition}

This now gives the identity
\eqnn{x^\alpha=\sum_{n=0}^\infty\bigg[\sum_{k=0}^n P_k(n)x^k\bigg]P_n(\alpha)}
convergent for all $x$ such that $|x-1|<1$.

Indeed, this is a special case of the so-called \emph{Newton series} of polynomial interpolation.

\begin{Definition}
The \textbf{Newton series} for a function $f$ is given by
\eqn{f(x)=\sum_{n=0}^\infty (-1)^n \Delta^n[f](x_0) P_n(x-x_0)}
where $\Delta^n[\cdot]$ is the finite-difference operator. However, we expand the operator and define it as
\eqn{f(x)&=\sum_{n=0}^\infty \bigg[\sum_{k=0}^n P_k(n)f(k-x_0)\bigg]P_n(x-x_0).}
\end{Definition}

\begin{Remark}
For $x_0=0$ we see the \emph{interpolation of integer values}:
\eqnn{f(x)&=\sum_{n=0}^\infty \bigg[\sum_{k=0}^n P_k(n)f(k)\bigg]P_n(x).}
such that a function is determined by its values at the non-negative integers.
\end{Remark}

While the Newton series is known to converge for functions of exponential type slightly greater than $\log 2$, as seen in \cite{BoasBuck}, \cite{Klopfenstein}, this is the natural cut-off for exponential type. Thus, as was motivated heuristically, we consider the Newton series for $f(x)=x^\alpha$ converging for $|x-1|<1$.

\subsection{Shannon interpolation}

\begin{AlphTheorem}[Shannon sampling theorem \cite{Whittaker}]\label{T:Shannon}
	Suppose $f\in L^2(\R)$ such that $\textnormal{supp}[\hat{f}]\subseteq[-\frac{1}{2},\frac{1}{2}]$. Then
	\eqnn{f(x)=\sum_{n\in \Z}\sinc(x-n)f(n)}
	where $\hat{f}$ is the Fourier transform of $f$ and $\sinc(x)=\frac{\sin\pi x
	}{\pi x}$. Further, the series converges absolutely and uniformly.
\end{AlphTheorem}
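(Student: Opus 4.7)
The plan is to exploit the compact support of $\hat{f}$: since $\hat{f}$ vanishes outside $[-\frac{1}{2},\frac{1}{2}]$, it admits a Fourier series expansion on this interval whose coefficients turn out to be precisely the sample values $f(n)$, and applying Fourier inversion to $f$ itself then produces sinc kernels.

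First, I would observe that $\hat{f}\in L^2(\R)$ with compact support lies in $L^1(\R)$ by Cauchy-Schwarz against $\chi_{[-1/2,1/2]}$, so $f=\mathcal{F}^{-1}[\hat{f}]$ is bounded and continuous (in fact entire of exponential type by Paley-Wiener), and the samples $f(n)$ are well-defined. I would then expand $\hat{f}$ in its Fourier series on $[-\frac{1}{2},\frac{1}{2}]$,
\eqn{\hat{f}(\xi)=\sum_{n\in\Z}c_n\,e^{-2\pi i n \xi},\quad c_n=\int_{-1/2}^{1/2}\hat{f}(\xi)e^{2\pi i n \xi}\,d\xi=f(n),}
where the last equality identifies $c_n$ as the value of the inverse Fourier transform at $x=n$, valid because $\hat{f}$ vanishes outside the interval. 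The series converges in $L^2[-\frac{1}{2},\frac{1}{2}]$ by Riesz-Fischer.

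Next I would apply Fourier inversion to $f$: $f(x)=\int_{-1/2}^{1/2}\hat{f}(\xi)e^{2\pi i x \xi}\,d\xi$. Substituting the Fourier series of $\hat{f}$ and exchanging sum and integral — justified by Cauchy-Schwarz, since $e^{2\pi i x\xi}\in L^2[-\frac{1}{2},\frac{1}{2}]$ uniformly in $x$ — yields
\eqn{f(x)=\sum_{n\in\Z}f(n)\int_{-1/2}^{1/2}e^{2\pi i(x-n)\xi}\,d\xi=\sum_{n\in\Z}f(n)\sinc(x-n),}
which is the claimed reconstruction formula.

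Finally, I would verify both modes of convergence. Plancherel yields $\sum|f(n)|^2\le\|\hat{f}\|_{L^2}^2<\infty$, and $\sum_n|\sinc(x-n)|^2<\infty$ for each $x$ from the $O(1/|n|)$ decay of $\sinc$. Cauchy-Schwarz then delivers absolute convergence at each $x$. For uniform convergence of the symmetric partial sums, the same estimate applied to the truncation error,
\eqn{\Big|f(x)-\sum_{|n|\le N}f(n)\sinc(x-n)\Big|\le\Big\|\hat{f}-\sum_{|n|\le N}f(n)e^{-2\pi i n\cdot}\Big\|_{L^2[-1/2,1/2]},}
bounds it by a quantity independent of $x$ that vanishes by $L^2$ Fourier-series convergence. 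The main obstacle I anticipate is precisely this last step — transferring $L^2$-control on the frequency side into absolute and uniform control on the spatial side — and it is bridged by Cauchy-Schwarz through the $\ell^2$-summability of $\sinc$ translates.
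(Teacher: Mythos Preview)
Your argument is the standard and correct proof of the sampling theorem: expand $\hat{f}$ in its Fourier series on $[-\tfrac{1}{2},\tfrac{1}{2}]$, identify the coefficients as the samples $f(n)$, invert, and control the tail via Cauchy--Schwarz. There is nothing to compare here, however, because the paper does not supply its own proof of this statement; Theorem~A is quoted as a classical result with a citation to Whittaker, and the paper's original work begins with Proposition~\ref{P:ShannonUnit}.
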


Since we care about fractional powers, it is important to ask the question, does
\eqn{x^\alpha=\sum_{n\in \Z}\sinc(\alpha-n)x^n?}
The answer seems as though it would be ``no'', as $f_x(\alpha)=x^\alpha=e^{\alpha\log x}$ is certainly not in $L^2(\R)$. However, the Fourier transform is well-defined in the sense of tempered distributions as
\eqn{\mathcal{F}[e^{\alpha\log x}]=\int_\R e^{\alpha\log x}e^{-2\pi i\alpha\beta}d\alpha=\int_\R e^{-2\pi i\alpha(\beta-\frac{\log x}{2\pi i})}d\alpha=\delta\Big(\beta-\frac{\log x}{2\pi i}\Big)}
where for $\frac{\log x}{2\pi i}\in[-\frac{1}{2},\frac{1}{2}]\iff \log x\in[-\pi i,\pi i]\iff |x|=1$ the second condition of the Shannon sampling theorem is satisfied.

\begin{Proposition}\label{P:ShannonUnit}
	\eqnn{x^\alpha=\sum_{n\in \Z}\sinc(\alpha-n)x^n}
	if and only if $|x|=1$. The convergence of the series happens pointwise in $\alpha$.
\end{Proposition}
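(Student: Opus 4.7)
The plan is to reduce the identity, via the polar decomposition $x=re^{i\theta}$ with $r=|x|$ and $\theta=\arg x\in(-\pi,\pi]$ (principal branch), to a classical Fourier-series expansion in the variable $\theta$, and then to invoke standard pointwise convergence results. This sidesteps the technical difficulty that $\alpha\mapsto x^\alpha$ is not in $L^2(\R)$, so Theorem \ref{T:Shannon} cannot be invoked directly and the distributional derivation sketched above is only heuristic.

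I would dispatch the ``only if'' direction by a crude growth estimate. For $\alpha\notin\Z$ one has $|\sinc(\alpha-n)|\sim 1/(\pi|n|)$ as $|n|\to\infty$, while $|x^n|=r^n$, so the general term satisfies $|\sinc(\alpha-n)x^n|\sim r^n/(\pi|n|)$. If $r>1$ the positive-index tail diverges, and if $r<1$ the negative-index tail diverges, so any (symmetric) notion of convergence forces $r=1$. The case $\alpha\in\Z$ is trivial because $\sinc(\alpha-n)=\delta_{\alpha n}$ and both sides collapse.

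For the ``if'' direction I would fix $|x|=1$, write $x=e^{i\theta}$ with $\theta\in(-\pi,\pi)$, and recognise
\[
\sinc(\alpha-n)=\frac{1}{2\pi}\int_{-\pi}^{\pi}e^{i\alpha\theta'}e^{-in\theta'}\,d\theta'
\]
as the $n^{\text{th}}$ Fourier coefficient on $\T=[-\pi,\pi]$ of the function $\phi(\theta')=e^{i\alpha\theta'}$. Hence the symmetric partial sums $\sum_{|n|\le N}\sinc(\alpha-n)e^{in\theta}$ are precisely the symmetric Fourier partial sums, evaluated at $\theta$, of the $2\pi$-periodic extension $\Phi$ of $\phi|_{[-\pi,\pi)}$. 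Because $\Phi$ is piecewise $C^\infty$ and continuous at every $\theta\in(-\pi,\pi)$, the Dirichlet--Jordan pointwise convergence theorem yields $\Phi(\theta)=e^{i\alpha\theta}=x^\alpha$ as the limit, giving the identity for every $x$ on the unit circle with $x\neq -1$.

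The main obstacle I anticipate is the boundary point $\theta=\pi$, i.e.\ $x=-1$: there $\Phi$ has a jump of magnitude $2i\sin(\pi\alpha)$, so its Fourier series converges to the average $\cos(\pi\alpha)$, which differs from the principal-branch value $(-1)^\alpha=e^{i\alpha\pi}$ unless $\alpha\in\Z$. I would therefore qualify the statement as holding on $\{|x|=1\}\setminus\{-1\}$ in the strict sense, while noting that the Ces\`aro (or Abel) mean recovers the natural ``midpoint'' value at $x=-1$. Apart from this caveat the argument closes cleanly, rigorously replacing the distributional sketch and making transparent why the convergence is pointwise in $\alpha$ rather than uniform.
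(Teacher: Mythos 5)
Your proof is correct, and it takes a genuinely different route from the paper's. The paper argues directly on the series: it uses $\sin(\pi(\alpha-n))=(-1)^n\sin(\pi\alpha)$ to rewrite the sum as $\frac{\sin\pi\alpha}{\pi}\sum_{n\in\Z}\frac{(-x)^n}{\alpha-n}$ and then invokes Dirichlet's test (for $x\neq-1$) and a pairing of the $\pm n$ terms (for $x=-1$) to establish \emph{convergence}; your version instead recognizes $\sinc(\alpha-n)=\frac{1}{2\pi}\int_{-\pi}^{\pi}e^{i(\alpha-n)\theta'}d\theta'$ as the $n^{\text{th}}$ Fourier coefficient of $e^{i\alpha\theta'}$ on $[-\pi,\pi)$ and appeals to Dirichlet--Jordan. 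What your approach buys is substantial: the paper's proof never actually identifies the limit of the series as $x^\alpha$ --- it only shows the series converges --- whereas the Fourier-series framing gives the value of the sum for free. It also correctly isolates the failure at $x=-1$, which is a genuine defect in the proposition as stated, not merely a caveat: there the symmetric sum is
\begin{displaymath}
\sum_{n\in\Z}\sinc(\alpha-n)(-1)^n=\frac{\sin(\pi\alpha)}{\pi}\sum_{n\in\Z}\frac{1}{\alpha-n}=\frac{\sin(\pi\alpha)}{\pi}\cdot\pi\cot(\pi\alpha)=\cos(\pi\alpha),
\end{displaymath}
which differs from the principal-branch value $(-1)^\alpha=e^{i\pi\alpha}$ whenever $\alpha\notin\Z$. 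So the paper's Case 2 proves convergence to the wrong value, and your restriction of the identity to $\{|x|=1\}\setminus\{-1\}$ is the correct statement. The only minor point to tidy is the ``only if'' direction: rather than relying on the asymptotic $|\sinc(\alpha-n)|\sim|\sin\pi\alpha|/(\pi|n|)$ informally, state explicitly (as the paper does) that the relevant tail has terms not tending to zero, which is all that is needed.
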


\begin{proof}
	($\sim\Longleftarrow$) Suppose $|x|\neq 1$. Then either $|x|>1$ or $|x^{-1}|>1$. Without loss of generality, let $|x|>1$. As such if one considers
	\eqn{\sum_{n\in \Z}\sinc(\alpha-n)x^n=\sinc(\alpha)+\sum_{n\in\N} \sinc(\alpha+n)x^{-n}+\sum_{n\in\N} \sinc(\alpha-n)x^n}
	then the first sum on the right-hand side is clearly convergent as it is bounded by a geometric series. Hence, the left-hand side will converge if and only if
	\eqn{\sum_{n=0}^\infty \sinc(\alpha-n)x^n}
	converges. This cannot be the case, however, as $\lim_{n\to\infty}|\sinc(\alpha-n)x^n|=\infty\neq 0$. ($\qed\sim\Longleftarrow$)\\
	
	($\Longleftarrow$) There are two cases.

	\textbf{Case 1:} $|x|=1$ and $x\neq-1$.

	We observe that $\sin(\pi(\alpha-n))=\sin(\pi\alpha-\pi n)=(-1)^n\sin(\pi\alpha)$ and hence we have that
	\eqn{\sum_{n\in\Z}\sinc(\alpha-n)x^n&=\sum_{n\in \Z}\frac{\sin(\pi(\alpha-n))}{\pi(\alpha-n)}x^n=\frac{\sin(\pi\alpha)}{\pi}\sum_{n\in \Z}\frac{(-x)^n}{\alpha-n}\\
	&=\frac{\sin(\pi\alpha)}{\pi}\bigg[\frac{1}{\alpha}+\sum_{n\in\N}\frac{(-x)^n}{\alpha-n}+\sum_{n\in\N}\frac{(-x)^{-n}}{\alpha+n}\bigg].}
	We now observe by Dirichlet's test, with $a_n=\frac{1}{n-\alpha}$ and $b_n=(-x)^{n}$ we have 
	\begin{enumerate}
		\item for $n$ large enough $a_{n+1}\leq a_n$
		\item $\lim_{n\to\infty}a_n=0$
		\item $\Big|\sum_{n=0}^N b_n\Big|=\Big|\frac{1-(-x)^n}{1-(-x)}\Big|\leq \frac{2}{|1+x|}<\infty.$
	\end{enumerate}
	Hence the first series converges. Similarly, with $a_n=\frac{1}{n+\alpha}$ and $b_n=(-x)^{-n}$ that 
	\begin{enumerate}
		\item for $n$ large enough $a_{n+1}\leq a_n$
		\item $\lim_{n\to\infty}a_n=0$
		\item $\Big|\sum_{n=0}^N b_n\Big|=\Big|\frac{1-(-x)^{-n}}{1-(-x)^{-1}}\Big|\leq \frac{2}{|1+x^{-1}|}<\infty.$
	\end{enumerate}
	Hence the second series converges.

	\textbf{Case 2:} $x=-1$.

	\eqn{\sum_{n\in\Z}\sinc(\alpha-n)x^n&=\sum_{n\in \Z}\frac{\sin(\pi(\alpha-n))}{\pi(\alpha-n)}x^n=\frac{\sin(\pi\alpha)}{\pi}\sum_{n\in \Z}\frac{(-x)^n}{\alpha-n}\\
	&=\frac{\sin(\pi\alpha)}{\pi \alpha}+\frac{\sin(\pi\alpha)}{\pi}\sum_{n\in\N}\bigg[\frac{(-x)^n}{\alpha-n}+\frac{(-x)^{-n}}{\alpha+n}\bigg]\\
	&=\frac{\sin(\pi\alpha)}{\pi \alpha}+\frac{\sin(\pi\alpha)}{\pi}\sum_{n\in\N}\bigg[\frac{(\alpha+n)(-x)^n+(\alpha-n)(-x)^{-n}}{\alpha^2-n^2}\bigg]\\
	&=\frac{\sin(\pi\alpha)}{\pi \alpha}+\frac{\sin(\pi\alpha)}{\pi}\sum_{n\in\N}\frac{\alpha\big[(-x)^n+(-x)^{-n}\big]}{\alpha^2-n^2}+\frac{\sin(\pi\alpha)}{\pi}\sum_{n\in\N}\frac{n\big[(-x)^n-(-x)^{-n}\big]}{\alpha^2-n^2}.}

	However, the first series in the last line is on the order of $\sum_{n\in \N}\frac{1}{n^2}<\infty$ and the second series reduces to $\sum_{n\in\N}\frac{n\cdot 0}{\alpha^2-n^2}=0$ since $x=-1$. Therefore the series converges. ($\qed\Longleftarrow$)
\end{proof}

\begin{Proposition}\label{P:ShannonPeriod}
If $\{c_n\}_{n\in\Z}$ is periodic with period $N$. Then
\eqnn{\sum_{n\in\Z}\sinc(x-n)c_n=\begin{cases}
\frac{1}{N}\sum_{n=0}^{N-1} \sin\big(\pi (x-n)\big)\cot\big(\frac{\pi}{N} (x-n)\big)c_n&\text{if }N\text{ is even,}\\
\frac{1}{N}\sum_{n=0}^{N-1} \sin\big(\pi (x-n)\big)\csc\big(\frac{\pi}{N} (x-n)\big)c_n&\text{if }N\text{ is odd.}
\end{cases}}
\end{Proposition}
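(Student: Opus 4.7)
My plan is to reindex the doubly-infinite sum by residue classes modulo $N$, exploit periodicity, and then recognize the resulting inner sums as classical partial fraction expansions of cotangent and cosecant.

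First I would split each integer $n\in\Z$ uniquely as $n=j+Nk$ with $j\in\{0,1,\dots,N-1\}$ and $k\in\Z$. Using the hypothesis $c_{j+Nk}=c_j$, the series rearranges (with symmetric partial sums $\sum_{k=-K}^{K}$) as
\eqn{\sum_{n\in\Z}\sinc(x-n)c_n=\sum_{j=0}^{N-1}c_j\, S_N(x-j),\qquad S_N(y):=\sum_{k\in\Z}\sinc(y-Nk).}
So the entire problem reduces to evaluating $S_N(y)$ in closed form.

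Next I would peel off the numerator in $\sinc(y-Nk)=\sin(\pi(y-Nk))/[\pi(y-Nk)]$. Since $\sin(\pi(y-Nk))=\sin(\pi y)\cos(\pi N k)-\cos(\pi y)\sin(\pi N k)$ and $\sin(\pi Nk)=0$ for all integers $k$, the numerator collapses to $\sin(\pi y)\cos(\pi Nk)$. If $N$ is even then $\cos(\pi Nk)=1$ for every $k$, and if $N$ is odd then $\cos(\pi Nk)=(-1)^k$. Therefore
\eqn{S_N(y)=\frac{\sin(\pi y)}{\pi}\sum_{k\in\Z}\frac{\varepsilon_k}{y-Nk},\qquad \varepsilon_k=\begin{cases}1 & N\text{ even,}\\ (-1)^k & N\text{ odd.}\end{cases}}
Substituting $z=y/N$ pulls out a factor of $1/N$, and the remaining series are the classical Mittag-Leffler/partial fraction expansions
\eqn{\pi\cot(\pi z)=\sum_{k\in\Z}\frac{1}{z-k},\qquad \pi\csc(\pi z)=\sum_{k\in\Z}\frac{(-1)^k}{z-k},}
each interpreted as a symmetric principal value. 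This gives $S_N(y)=\frac{1}{N}\sin(\pi y)\cot(\pi y/N)$ when $N$ is even, and $S_N(y)=\frac{1}{N}\sin(\pi y)\csc(\pi y/N)$ when $N$ is odd. Plugging $y=x-j$ back into the expression for the original sum produces precisely the two stated formulas.

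The main subtlety is that $\sum_{k\in\Z}\sinc(y-Nk)$ is only conditionally convergent, so the rearrangement of $\sum_{n\in\Z}\sinc(x-n)c_n$ into $\sum_j c_j S_N(x-j)$ must be justified. I would do this by passing to the symmetric partial sums $\sum_{|n|\le NK}$, which correspond exactly to $\sum_{j=0}^{N-1}c_j\sum_{|k|\le K}\sinc(x-j-Nk)$ (up to finitely many boundary terms that vanish as $K\to\infty$), and then invoke convergence of the symmetric partial sums of the Mittag-Leffler series. Once this bookkeeping is set up, the rest is straightforward trigonometric identification.
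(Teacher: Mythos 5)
Your proposal is correct, but it reaches the closed form by a genuinely different route than the paper. Both arguments begin identically, splitting the sum over residue classes modulo $N$ so that everything reduces to evaluating $S_N(y)=\sum_{k\in\Z}\sinc(y-Nk)$. The paper then recognizes $S_N$ as the Shannon reconstruction of the ``comb'' sequence $d_m$ ($d_m=1$ when $N\mid m$, else $0$), exhibits $f(y)=\frac{1}{N}\sin(\pi y)\cot(\frac{\pi}{N}y)$ (resp.\ $\csc$) as a function with those integer samples, and appeals to the sampling theorem to conclude $f=S_N$ --- which requires asserting that $\widehat{f}$ is supported in $[-\frac12,\frac12]$ ``in the sense of tempered distributions,'' a step outside the literal scope of Theorem~\ref{T:Shannon} since $f\notin L^2(\R)$. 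You instead compute $S_N$ directly: collapsing the numerator via $\sin(\pi(y-Nk))=\sin(\pi y)\cos(\pi Nk)$ and identifying the remaining sum with the classical Mittag--Leffler expansions $\pi\cot(\pi z)=\sum_k\frac{1}{z-k}$ and $\pi\csc(\pi z)=\sum_k\frac{(-1)^k}{z-k}$ (as symmetric principal values). Your route is more elementary and self-contained: it avoids the distributional extension of the sampling theorem entirely, and you also make explicit the rearrangement bookkeeping for the conditionally convergent series (symmetric partial sums, vanishing boundary terms), which the paper passes over in silence. What the paper's approach buys is conceptual uniformity --- the whole section is organized around Shannon interpolation, and its proof shows the proposition as an instance of sampling a band-limited interpolant --- but as a matter of rigor your version is the stronger one.
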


\begin{proof}
\eqn{\sum_{n\in\Z}\sinc(x-n)c_n=\sum_{n=0}^{N-1} \bigg[\sum_{m\in\Z} \sinc\big((x-n)-Nm)\big)\bigg]c_n.}
Further we have that
\eqn{\sum_{m\in\Z} \sinc(x-Nm)=\sum_{m\in\Z} \sinc(x-m)d_n}
where $d_{kN}=1$ for all $k\in\Z$ and is otherwise zero. Since
\eqn{f(x)=\begin{cases}
\frac{1}{N}\sin\big(\pi x\big)\cot\big(\frac{\pi}{N} x\big)&\text{if }N\text{ is even,}\\
\frac{1}{N}\sin\big(\pi x\big)\csc\big(\frac{\pi}{N} x\big)&\text{if }N\text{ is odd}
\end{cases}}
has the property that $f(kN)=1$ for all $k\in\Z$ and is otherwise zero at the integers, $f(n)=d_n$ for all $n\in\Z$, and it is readily checked that $\textnormal{supp}[\hat{f}]\subseteq[-\frac{1}{2},\frac{1}{2}]$ in the sense of tempered distributions. Therefore
\eqn{f(x-n)=\sum_{m\in\Z} \sinc\big((x-n)-m)\big)f(m-n)=\sum_{m\in\Z} \sinc\big((x-n)-m)\big)d_n=\sum_{m\in\Z} \sinc\big((x-n)-Nm)\big).}

Hence
\eqn{\sum_{n\in\Z}\sinc(x-n)c_n&=\sum_{n=0}^{N-1} \bigg[\sum_{m\in\Z} \sinc\big((x-n)-Nm)\big)\bigg]c_n=\sum_{n=0}^{N-1} f(x-n)c_n\\
&=\begin{cases}
\frac{1}{N}\sum_{n=0}^{N-1} \sin\big(\pi (x-n)\big)\cot\big(\frac{\pi}{N} (x-n)\big)c_n&\text{if }N\text{ is even,}\\
\frac{1}{N}\sum_{n=0}^{N-1} \sin\big(\pi (x-n)\big)\csc\big(\frac{\pi}{N} (x-n)\big)c_n&\text{if }N\text{ is odd.}
\end{cases}}

\end{proof}

\section{Interpolation of operator powers}

The functional calculus requires that for $T$ (a bounded linear operator) and $f$ (a function), the domain of $f$ must contain the spectrum of $T$ to well-define $f(T)$. Our two interpolation strategies give $f(x)=x^\alpha$ on the domains $|x-1|<1$ and $|x|=1$ respectively. Hence, if we have any hope for defining $T^\alpha$, we must require that $|\sigma(T)-1|<1$ or $|\sigma(T)|=1$. This naturally leads to two cases.

We define $\C^+=\{z\in\C:\text{Re}[z]>0\}$.

\subsection{$\sigma(T)\subset \overline{B(z,|z|)}$ for some $z\in \C$}

\begin{Theorem}\label{T:NewtonOperator1}
For $T$ a bounded linear operator such that $\sigma(T)\subsetneq {B(z,|z|)}$, then there exists $\rho\in \C$ such that $\frac{\sigma(T)}{\rho}=\sigma \big(\frac{T}{\rho}\big)\subset B(1,1)$. Then
\eqn{\Big(\frac{T}{\rho}\Big)^\alpha=\sum_{n=0}^\infty\bigg[\sum_{k=0}^n P_k(n)\Big(\frac{T}{\rho}\Big)^k\bigg]P_n(\alpha).}
As such
\eqnn{T^\alpha =\sum_{n=0}^\infty\bigg[\sum_{k=0}^n P_k(n) \rho^{\alpha-k}T^k\bigg]P_n(\alpha).}
\end{Theorem}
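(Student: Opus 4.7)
The overall approach is to reduce the operator identity to the scalar Newton expansion~(1) by invoking the holomorphic functional calculus, and then to unpack the algebra. For the first step I would take $\rho=z$. Since spectra scale, the hypothesis $\sigma(T)\subsetneq B(z,|z|)$ becomes $\sigma(T/\rho)\subset B(1,1)$; because $T$ is bounded its spectrum is compact, so there exists $r<1$ with $\sigma(T/\rho-I)\subset\overline{B(0,r)}$, i.e.\ the spectral radius satisfies $r(T/\rho-I)\leq r<1$. I would also note that $B(1,1)\subset\C^+$ (the only point of its boundary on the imaginary axis is $0$, which is excluded from the open ball), so the principal branch of $f(x)=x^\alpha$ is holomorphic on an open neighborhood of $\sigma(T/\rho)$.

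The second step is to promote the scalar power series $f(x)=\sum_n\binom{\alpha}{n}(x-1)^n$, whose radius of convergence about $1$ is exactly $1$, to an operator identity. By Cauchy--Hadamard, $\limsup_n|\binom{\alpha}{n}|^{1/n}=1$; combined with Gelfand's formula $\lim_n\|(T/\rho-I)^n\|^{1/n}=r(T/\rho-I)\leq r<1$, the root test yields absolute operator-norm convergence of $\sum_n\binom{\alpha}{n}(T/\rho-I)^n$. Equivalently, since $f$ is holomorphic on an open set containing $\sigma(T/\rho)$, the Riesz--Dunford calculus produces exactly this series, and I would take it as the definition of $(T/\rho)^\alpha$.

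The last step is purely algebraic. Applying the operator binomial theorem to the commuting pair $T/\rho$ and $I$ gives $(T/\rho-I)^n=(-1)^n\sum_{k=0}^n P_k(n)(T/\rho)^k$, and combined with $\binom{\alpha}{n}=(-1)^n P_n(\alpha)$ this rewrites each term of the series as $P_n(\alpha)\sum_k P_k(n)(T/\rho)^k$, yielding the first displayed formula of the theorem. Multiplying by $\rho^\alpha$ and absorbing $\rho^{-k}$ from $(T/\rho)^k$ into the scalar factor then produces $T^\alpha=\sum_n\big[\sum_{k=0}^n P_k(n)\rho^{\alpha-k}T^k\big]P_n(\alpha)$.

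The main obstacle I anticipate is not the algebra but verifying the branch consistency $T^\alpha=\rho^\alpha(T/\rho)^\alpha$: this relies on $\sigma(T/\rho)\subset B(1,1)\subset\C^+$ keeping every value of $(T/\rho)^\alpha$ on the principal sheet while $\rho^\alpha=z^\alpha$ is taken via the principal logarithm of $z\neq 0$, so that no branch cut is crossed when the two are multiplied. Once that point is secured, the convergence step is routine via the spectral-radius estimate, and the remainder is a mechanical rearrangement.
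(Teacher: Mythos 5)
Your proof is correct, and it reaches the stated identity by a technically different route from the paper. The paper writes $(T/\rho)^\alpha$ as the Riesz--Dunford contour integral $\frac{1}{2\pi i}\oint_\Gamma\lambda^\alpha(\lambda-T/\rho)^{-1}\,d\lambda$ over a closed curve $\Gamma\subset B(1,1)$ enclosing $\sigma(T/\rho)$, substitutes the scalar Newton expansion of $\lambda^\alpha$ under the integral sign, and exchanges summation with the (Bochner) integral to recover $(T/\rho)^k$ term by term. You bypass the contour entirely: compactness of the spectrum inside the open ball gives $r(T/\rho-I)\le r<1$, Gelfand's formula gives $\limsup_n\|(T/\rho-I)^n\|^{1/n}\le r$, Cauchy--Hadamard gives $\limsup_n|\binom{\alpha}{n}|^{1/n}=1$, and the root test yields absolute operator-norm convergence of $\sum_n\binom{\alpha}{n}(T/\rho-I)^n$; the double sum is then a term-by-term rewriting with a finite inner sum, so no rearrangement issue arises. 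Your version buys a cleaner convergence argument --- the paper's exchange of sum and integral really rests on uniform convergence of the Newton series on a compact curve at positive distance from $\partial B(1,1)$, which is only gestured at --- while the paper's contour formulation makes agreement with the holomorphic-functional-calculus definition of $(T/\rho)^\alpha$ automatic, a point you cover by citing the standard fact that a norm-convergent power series in an operator coincides with the Dunford calculus. Your explicit attention to the branch question ($B(1,1)\subset\C^+$, so the principal branch is holomorphic near the spectrum and $T^\alpha=\rho^\alpha(T/\rho)^\alpha$ is unambiguous) addresses a step the paper passes over silently in the ``as such'' clause.
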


\begin{proof}
Since $\sigma\big(\frac{T}{\rho}\big)\subset B(1,1)$, we let $\Gamma\subset B(1,1)$ be a closed curve enclosing $\sigma\big(\frac{T}{\rho}\big)$. Then we have from the holomorphic functional calculus
\eqn{\Big(\frac{T}{\rho}\Big)^\alpha&=\frac{1}{2\pi i}\oint_\Gamma \lambda^\alpha \Big(\lambda-\frac{T}{\rho}\Big)^{-1}d\lambda=\frac{1}{2\pi i}\oint_\Gamma \sum_{n=0}^\infty\bigg[\sum_{k=0}^n P_k(n) \lambda^k \Big(\lambda-\frac{T}{\rho}\Big)^{-1}\bigg]P_n(\alpha)d\lambda\\
&=\sum_{n=0}^\infty\bigg[\sum_{k=0}^n P_k(n) \bigg(\frac{1}{2\pi i}\oint_\Gamma \lambda^k \Big(\lambda-\frac{T}{\rho}\Big)^{-1} d\lambda\bigg)\bigg]P_n(\alpha)=\sum_{n=0}^\infty\bigg[\sum_{k=0}^n P_k(n)\Big(\frac{T}{\rho}\Big)^k\bigg]P_n(\alpha)}
where the sum and the integral are exchanged with the understanding that the integral is a Bochner integral, in which a limit of Riemann sums converges uniformly in the strong operator topology.
\end{proof}

\begin{Theorem}\label{T:NewtonOperator2}
For $T$ a bounded linear operator such that $\sigma(T)\subset \overline{B(z,|z|)}$, then there exists $\rho\in \C$ such that $\frac{\sigma(T)}{\rho}=\sigma \big(\frac{T}{\rho}\big)\setminus\{0\}\subset B(1,1)$. Then, for all $\alpha\in\C^+$,
\eqn{\Big(\frac{T}{\rho}\Big)^\alpha=\sum_{n=0}^\infty\bigg[\sum_{k=0}^n P_k(n)\Big(\frac{T}{\rho}\Big)^k\bigg]P_n(\alpha).}
As such, for all $\alpha\in\C^+$,
\eqnn{T^\alpha =\sum_{n=0}^\infty\bigg[\sum_{k=0}^n P_k(n) \rho^{\alpha-k}T^k\bigg]P_n(\alpha).}
\end{Theorem}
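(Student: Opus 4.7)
I plan to reduce to Theorem~\ref{T:NewtonOperator1} by an approximation argument. For $\epsilon\in(0,1)$ set $T_\epsilon:=(1-\epsilon)T+\epsilon\rho\,\Id$. The spectral mapping theorem together with the identity $(1-\epsilon)\mu+\epsilon-1=(1-\epsilon)(\mu-1)$ yields $|(1-\epsilon)\mu+\epsilon-1|\leq 1-\epsilon$ for every $\mu\in\sigma(T/\rho)$, hence $\sigma(T_\epsilon/\rho)\subset\overline{B(1,1-\epsilon)}\subsetneq B(1,1)$. Theorem~\ref{T:NewtonOperator1} therefore applies to $T_\epsilon$ and produces
$$\Big(\frac{T_\epsilon}{\rho}\Big)^\alpha=\sum_{n=0}^\infty\bigg[\sum_{k=0}^n P_k(n)\Big(\frac{T_\epsilon}{\rho}\Big)^k\bigg]P_n(\alpha).$$
The proof is completed by passing to $\epsilon\to 0^+$ on both sides.

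For the LHS, I would define $(T/\rho)^\alpha$ via a Dunford--Riesz contour integral along a curve $\Gamma$ that encloses $\sigma(T/\rho)$ and avoids the branch cut of $\lambda^\alpha$ by a small semicircular detour around the branch point $0$. For $\text{Re}[\alpha]>0$ the estimate $|\lambda^\alpha|\leq|\lambda|^{\text{Re}[\alpha]}$ makes the detour contribution vanish with its radius, so $(T/\rho)^\alpha$ is well-defined precisely under the hypothesis $\alpha\in\C^+$. Norm-continuity of the resolvent in $\epsilon$ combined with dominated convergence on $\Gamma$ then gives $(T_\epsilon/\rho)^\alpha\to(T/\rho)^\alpha$ in operator norm.

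For the RHS, the decisive algebraic observation is $\sum_{k=0}^n P_k(n)\lambda^k=\sum_{k=0}^n(-1)^k\binom{n}{k}\lambda^k=(1-\lambda)^n$, which recasts the inner bracket as $(\Id-T_\epsilon/\rho)^n$ and exhibits the whole series as the operator binomial series $\sum_{n\geq 0}\binom{\alpha}{n}(T_\epsilon/\rho-\Id)^n$. Termwise convergence as $\epsilon\to 0$ is immediate from norm-continuity of the polynomial functional calculus. To interchange $\lim_{\epsilon\to 0}$ with the infinite sum I would combine the Gamma-function asymptotic $|P_n(\alpha)|=O(n^{-\text{Re}[\alpha]-1})$ (summable precisely when $\text{Re}[\alpha]>0$) with a uniform-in-$\epsilon$ contour-integral bound on $\|(\Id-T_\epsilon/\rho)^n\|$.

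The hardest step is securing that uniform bound, because $\sigma(\Id-T/\rho)$ may touch $\partial B(0,1)$ at $-1$, so no single fixed contour around $\overline{B(0,1)}$ gives a uniform resolvent bound as $\epsilon\to 0^+$. I expect the fix to be a contour with a fixed positive-radius detour around $-1$ together with a diagonal argument splitting the series at a level $N(\epsilon)\to\infty$ chosen to balance the resolvent blow-up against the tail decay, after which dominated convergence finishes the proof.
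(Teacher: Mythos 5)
Your proposal follows essentially the same route as the paper: perturb $T/\rho$ so that its spectrum lies strictly inside $B(1,1)$ (the paper uses the additive shift $T/\rho+\epsilon$ where you shrink toward $\rho\,\Id$), invoke Theorem~\ref{T:NewtonOperator1}, and pass to the limit $\epsilon\to 0^+$. The only substantive difference is that the paper simply asserts the series is ``uniformly convergent in the operator norm'' to justify exchanging the limit with the sum, whereas you correctly identify this interchange --- a uniform-in-$\epsilon$ bound on $\|(\Id-T_\epsilon/\rho)^n\|$ against the $O(n^{-\mathrm{Re}[\alpha]-1})$ decay of $P_n(\alpha)$ --- as the genuine crux and sketch how to secure it.
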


\begin{proof}
We note that since $\sigma\big(\frac{T}{\rho}\big)\setminus\{0\}\subsetneq B(1,1)$ then there is an $\epsilon>0$ such that $\sigma\big(\frac{T}{\rho}+\epsilon\big)\subsetneq B(1,1)$ Then theorem (\ref{T:NewtonOperator1}) applies and we have that
\eqn{\Big(\frac{T}{\rho}+\epsilon\Big)^\alpha=\sum_{n=0}^\infty\bigg[\sum_{k=0}^n P_k(n)\Big(\frac{T}{\rho}+\epsilon\Big)^k\bigg]P_n(\alpha).}
However, this series is uniformly convergent in the operator norm, and as such we have that, for all $\alpha\in\C^+$,
\eqn{\Big(\frac{T}{\rho}\Big)^\alpha&=\lim_{\epsilon\to 0^+}\Big(\frac{T}{\rho}+\epsilon\Big)^\alpha=\lim_{\epsilon\to 0^+}\sum_{n=0}^\infty\bigg[\sum_{k=0}^n P_k(n)\Big(\frac{T}{\rho}+\epsilon\Big)^k\bigg]P_n(\alpha)\\
	&=\sum_{n=0}^\infty\bigg[\sum_{k=0}^n P_k(n)\lim_{\epsilon\to 0^+}\Big(\frac{T}{\rho}+\epsilon\Big)^k\bigg]P_n(\alpha)=\sum_{n=0}^\infty\bigg[\sum_{k=0}^n P_k(n)\Big(\frac{T}{\rho}\Big)^k\bigg]P_n(\alpha).}
\end{proof}

\subsection{$|\lambda|=1$ for all $\lambda\in\sigma(T)$}

\begin{Theorem}\label{T:ShannonOperator}
Suppose $T$ is unitary. Then
\eqnn{T^\alpha=\sum_{n\in\Z}\sinc(\alpha-n)T^n.}
\end{Theorem}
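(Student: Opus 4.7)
The plan is to reduce Theorem~\ref{T:ShannonOperator} to the scalar statement of Proposition~\ref{P:ShannonUnit} via the spectral theorem for unitary (hence normal) operators. Since $T$ is unitary, $\sigma(T) \subset \{z\in\C : |z|=1\}$, and the Borel functional calculus gives a projection-valued measure $E$ supported on $\sigma(T)$ with
\[
T^n = \int_{\sigma(T)} \lambda^n\, dE(\lambda), \qquad T^\alpha = \int_{\sigma(T)} \lambda^\alpha\, dE(\lambda),
\]
where $\lambda^\alpha$ is taken in the principal branch as agreed in the introduction. By Proposition~\ref{P:ShannonUnit}, for each $\lambda$ with $|\lambda|=1$ the scalar identity
\[
\lambda^\alpha = \sum_{n\in\Z} \sinc(\alpha-n)\,\lambda^n
\]
holds pointwise in $\alpha$. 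Formally inserting this into the spectral integral and exchanging the sum with the integral would produce exactly $T^\alpha = \sum_{n\in\Z}\sinc(\alpha-n)\,T^n$, which is the desired identity.

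The substance of the proof therefore reduces to justifying this interchange. I would test the identity against an arbitrary vector $f$ in the underlying Hilbert space and work with the finite positive scalar spectral measure $\mu_f(B) = \langle E(B)f, f\rangle$, so that
\[
\Big\| \sum_{|n|\le N} \sinc(\alpha-n)\,T^n f - T^\alpha f \Big\|^2 = \int_{\sigma(T)} \big| S_N(\alpha,\lambda) - \lambda^\alpha \big|^2\, d\mu_f(\lambda),
\]
where $S_N(\alpha,\lambda) = \sum_{|n|\le N} \sinc(\alpha-n)\lambda^n$. Writing $\lambda = e^{i\theta}$, the partial sum $S_N(\alpha,e^{i\theta})$ is precisely the symmetric Fourier partial sum of the bounded variation function $\theta\mapsto e^{i\alpha\theta}$ on $(-\pi,\pi]$, so by Jordan's theorem $S_N(\alpha,\lambda) \to \lambda^\alpha$ at every $\lambda\neq -1$, and the partial sums are uniformly bounded in $(N,\lambda)$ by a constant depending only on $\alpha$. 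Provided $\mu_f$ places no mass at the single point $\lambda=-1$, dominated convergence then yields strong-operator convergence of the Shannon series to $T^\alpha$.

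The main obstacle is precisely the point $\lambda=-1$, where the Fourier/Shannon partial sum converges to $\cos(\pi\alpha)$ rather than the principal-branch value $e^{i\pi\alpha}$, so a spectral atom at $-1$ would spoil the identity. I would handle this by a small perturbation argument: replace $T$ by $e^{i\epsilon}T$, whose spectrum is rotated off of $\lambda=-1$, apply the argument above to obtain the identity for $e^{i\epsilon}T$, and then let $\epsilon\to 0$, using continuity of the Borel functional calculus for the bounded function $\lambda\mapsto \lambda^\alpha$ on the unit circle and uniform bounds on the partial sums to pass to the limit on both sides. This mild technical step is the only real content beyond invoking the spectral theorem and Proposition~\ref{P:ShannonUnit}.
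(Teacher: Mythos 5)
Your reduction to the scalar identity of Proposition \ref{P:ShannonUnit} via the functional calculus for normal operators is the same basic strategy as the paper's, but you execute it far more carefully: the paper simply invokes the continuous functional calculus, which is not actually sufficient here because the Shannon series converges only pointwise (not uniformly) on the unit circle, whereas your use of the Borel functional calculus, the scalar spectral measures $\mu_f$, the identification of the symmetric partial sums with Fourier partial sums of $\theta\mapsto e^{i\alpha\theta}$, and dominated convergence via Jordan's uniform bound is the right way to make the interchange legitimate. Up to that point your argument is sound and strictly stronger than the one in the paper.

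The gap is in your final perturbation step, and it is not a mild technicality: it cannot work, because when $E(\{-1\})\neq 0$ the statement of the theorem is simply false. Take $T=-I$. Then the principal branch gives $T^\alpha=e^{i\pi\alpha}I$, while
\[
\sum_{n\in\Z}\sinc(\alpha-n)T^n=\Big(\sum_{n\in\Z}\sinc(\alpha-n)(-1)^n\Big)I=\cos(\pi\alpha)\,I,
\]
exactly the averaged jump value you computed, and these disagree for every non-integer $\alpha$. No proof can bridge a false identity; concretely, your proposed limit $\epsilon\to 0$ fails because the convergence of the partial sums $S_N(\alpha,\lambda)$ to $\lambda^\alpha$ is not uniform near $\lambda=-1$ (the Gibbs-type behavior at the jump of $e^{i\alpha\theta}$ at $\theta=\pi$), so $\lim_{\epsilon\to 0}\lim_{N\to\infty}\neq\lim_{N\to\infty}\lim_{\epsilon\to 0}$ on a spectral atom at $-1$; moreover $(e^{i\epsilon}T)^\alpha$ itself is discontinuous at $\epsilon=0$ across the branch cut. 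The correct repair is to add the hypothesis $E(\{-1\})=0$ (equivalently, $-1$ is not an eigenvalue of $T$), under which your dominated convergence argument already closes the proof with no perturbation needed, since the exceptional point is a $\mu_f$-null set for every $f$. Note that this defect is inherited from the paper: Proposition \ref{P:ShannonUnit} itself only proves convergence of the series at $x=-1$, not that the sum equals the principal-branch value $(-1)^\alpha$, and in fact it does not.
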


\begin{proof}
Since $T$ is unitary, it is a normal operator, and as such there exists a continuous functional calculus on $T$. Hence for any continuous function $f$, $f(T)$ is defined if and only if $f(\sigma(T))$ is defined. By proposition (\ref{P:ShannonUnit}), since $|\lambda|=1$ for all $\lambda\in\sigma(T)$, we have that
\eqn{\sigma(T)^\alpha=\sum_{n\in \Z}\sinc(\alpha-n)\sigma(T)^n}
and as such
\eqn{T^\alpha=\sum_{n\in\Z}\sinc(\alpha-n)T^n.}
\end{proof}

\begin{Corollary}\label{C:PreserveUnitarity}
If $T$ is unitary, then $T^\alpha$ is unitary for all $\alpha\in\R$.
\end{Corollary}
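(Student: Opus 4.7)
The plan is to reduce the unitarity of $T^\alpha$ to the single adjoint identity $(T^\alpha)^*=T^{-\alpha}$. Once this is established,
\eqn{T^\alpha(T^\alpha)^*=T^\alpha T^{-\alpha}=T^0=\Id=(T^\alpha)^*T^\alpha}
will follow from the semigroup property emphasized in the introduction, which is inherited by the Shannon construction of Theorem \ref{T:ShannonOperator}.

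Starting from Theorem \ref{T:ShannonOperator} I would write
\eqn{T^\alpha=\sum_{n\in\Z}\sinc(\alpha-n)T^n\quad\text{and}\quad T^{-\alpha}=\sum_{n\in\Z}\sinc(-\alpha-n)T^n=\sum_{n\in\Z}\sinc(\alpha+n)T^n,}
where the last equality uses that $\sinc$ is an even function. Because $\alpha\in\R$, each coefficient $\sinc(\alpha-n)$ is real; because $T$ is unitary, $(T^n)^*=(T^*)^n=T^{-n}$ for every $n\in\Z$. Applying the adjoint termwise and then reindexing $m=-n$ therefore yields
\eqn{(T^\alpha)^*=\sum_{n\in\Z}\sinc(\alpha-n)T^{-n}=\sum_{m\in\Z}\sinc(\alpha+m)T^m=T^{-\alpha},}
which is exactly what is needed.

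The main obstacle is justifying the termwise action of $*$ on the infinite series. This is legitimate because, as in the proof of Theorem \ref{T:ShannonOperator}, $T^\alpha$ is defined through the continuous functional calculus for the normal operator $T$: the Shannon series should be read as the image, under that $*$-homomorphism, of the pointwise-convergent scalar series on $\sigma(T)\subseteq\{|z|=1\}$ furnished by Proposition \ref{P:ShannonUnit}. Under this homomorphism the adjoint corresponds to complex conjugation of the symbol, and on the unit circle $\overline{z^n}=z^{-n}$ while the real coefficients $\sinc(\alpha-n)$ are unchanged, exactly reproducing the reindexation above. In short, the whole computation lives inside the abelian $C^*$-algebra generated by $T$, so passing $*$ through the series is automatic, and the corollary follows.
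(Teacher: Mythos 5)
Your proof is correct and follows essentially the same route as the paper's: the same termwise adjoint using the evenness of $\sinc$, the reindexing $n\mapsto -n$, and the identity $(T^n)^*=T^{-n}$ for unitary $T$. You are in fact somewhat more complete than the paper, which stops at $T^{-\alpha}=(T^*)^\alpha$ and leaves the identification with $(T^\alpha)^*$ and the final check $T^\alpha(T^\alpha)^*=\Id$ implicit.
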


\begin{proof}
\eqn{T^{-\alpha}=\sum_{n\in \Z}\sinc(-\alpha-n)T^n=\sum_{n\in \Z}\sinc(\alpha+n)T^n=\sum_{n\in \Z}\sinc(\alpha-n)T^{-n}=\sum_{n\in \Z}\sinc(\alpha-n)(T^*)^n=(T^*)^\alpha.}
\end{proof}

\begin{Theorem}\label{T:PeriodicOperator}
If $T^N=T$ for some $N\in \N$. Then
\eqn{T^\alpha=\begin{cases}
\frac{1}{N}\sum_{n=0}^{N-1} \sin\big(\pi (\alpha-n)\big)\cot\big(\frac{\pi}{N} (\alpha-n)\big)T^n&\text{if }N\text{ is even,}\\
\frac{1}{N}\sum_{n=0}^{N-1} \sin\big(\pi (\alpha-n)\big)\csc\big(\frac{\pi}{N} (\alpha-n)\big)T^n&\text{if }N\text{ is odd.}
\end{cases}}
\end{Theorem}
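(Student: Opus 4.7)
The plan is to combine Theorem \ref{T:ShannonOperator} with Proposition \ref{P:ShannonPeriod}, viewing $\{T^n\}_{n\in\Z}$ as a periodic operator-valued sequence and collapsing the infinite sinc expansion for $T^\alpha$ into a finite sum of $N$ terms.

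First I would verify that $T$ satisfies the hypothesis of Theorem \ref{T:ShannonOperator}. The functional equation $T^N=T$ forces $T(T^{N-1}-\Id)=0$; once one rules out $0\in\sigma(T)$ (so that $T$ is invertible), this gives $T^{N-1}=\Id$, whence $\sigma(T)$ lies on the unit circle (in fact among the $(N-1)$-th roots of unity) and $T$ is unitary. Theorem \ref{T:ShannonOperator} then delivers
\eqn{T^\alpha=\sum_{n\in\Z}\sinc(\alpha-n)T^n.}

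Next I would exploit the periodicity of the coefficient sequence $c_n:=T^n$. Partitioning $\Z$ by residue class modulo $N$ and performing exactly the rearrangement carried out in the proof of Proposition \ref{P:ShannonPeriod}, one recognizes the inner $\sum_{m\in\Z}\sinc((\alpha-n)-Nm)$ as the periodized sinc kernel $f$ whose values at integers are $f(kN)=1$ and zero otherwise, so that $f(x)=\frac{1}{N}\sin(\pi x)\cot(\frac{\pi}{N}x)$ for even $N$ and $f(x)=\frac{1}{N}\sin(\pi x)\csc(\frac{\pi}{N}x)$ for odd $N$. This yields the two cases of the closed-form expression directly.

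The main obstacle is justifying that Proposition \ref{P:ShannonPeriod}, stated for scalar periodic $\{c_n\}$, extends to an operator-valued coefficient sequence. Since $T$ is unitary and hence normal, the cleanest route is to apply the scalar identity from Proposition \ref{P:ShannonPeriod} pointwise at each $\lambda\in\sigma(T)$ (the identity $\lambda^\alpha=\sum_n\sinc(\alpha-n)\lambda^n$ from Proposition \ref{P:ShannonUnit} makes sense since $|\lambda|=1$), and then integrate against the spectral measure of $T$ via the continuous functional calculus to lift it to an operator identity. Alternatively, one may apply the rearrangement of Proposition \ref{P:ShannonPeriod} vector-by-vector to the sequence $T^n v$; unitarity guarantees $\|T^n v\|=\|v\|$, so every manipulation in the scalar proof (which only involves reindexing a finite partition of $\Z$ and summing one scalar Shannon series per residue class) remains valid in the strong operator topology. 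Either route finishes the proof with no further computation.
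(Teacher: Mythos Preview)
Your approach matches the paper's: the paper's entire proof is the single sentence ``This is a direct result of proposition (\ref{P:ShannonPeriod}).'' You have filled in the steps the paper omits---invoking Theorem \ref{T:ShannonOperator} to obtain the sinc expansion in the first place, and justifying why Proposition \ref{P:ShannonPeriod} transfers from scalar to operator-valued periodic sequences---and your spectral/functional-calculus lift via the continuous functional calculus is exactly the right way to close that gap.

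One internal inconsistency worth flagging: you correctly derive $T^{N-1}=\Id$ from the hypothesis $T^N=T$, which makes $\{T^n\}_{n\in\Z}$ periodic with period $N-1$, not $N$; yet you then partition $\Z$ by residues modulo $N$. Feeding period $N-1$ into Proposition \ref{P:ShannonPeriod} would produce a sum of $N-1$ terms involving $\cot\big(\frac{\pi}{N-1}(\alpha-n)\big)$ or $\csc$, not the displayed formula. This is not a flaw in your strategy but appears to reflect a typo in the theorem statement itself: every application in the paper (Lemma \ref{L:DTrig1} uses $D^4=\Id$ with $N=4$; Proposition \ref{P:AltFrFT} uses $\mathcal{F}^4=\Id$ with $N=4$) treats the hypothesis as $T^N=\Id$, under which your argument and the stated formula agree perfectly.
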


\begin{proof}
This is a direct result of proposition (\ref{P:ShannonPeriod}).
\end{proof}

\section{Application}

\subsection{Fractional calculus}

\subsubsection{Fractional integration}

The integration operator $J:L^1(a,b)\to L^1(a,b)$, for $a\in[-\infty,\infty),b\in(a,\infty]$, defined via
	\eqn{J[f](x)=\int_a^x f(y)dy}
has the property that
\eqn{(J-\lambda)[f](x)&=\int_a^x f(y)dy-\lambda f(x)=g(x)\iff\\
f(x)=(J-\lambda)^{-1}[g](x)&=-\lambda^{-1}e^{\lambda^{-1}(x-a)}g(a)-\lambda^{-1}\int_a^x e^{\lambda^{-1} (x-y)}g'(y)dy}
where $g'$ is considered as the \emph{weak} derivative of $g$, and the expression only fails when $\lambda=0$, in which case we have
\eqn{J[f](x)=\int_a^x f(y)dy=g(x)\iff f(y)=J^{-1}[g](x)=g'(x)}
where this expression is satisfied if and only if $g$ is differentiable and $g(a)=0$.

Thus in the case where $a\neq -\infty$ we have that $\sigma(J)=\{0\}$, while in the case where $a=-\infty$ we have that $\sigma(J)=\overline{\C^+}$ as $J[e^{\lambda x}](x)=\lambda^{-1} e^{\lambda x}$ for all $\lambda\in\C^+$.

\begin{Proposition}\label{P:RLBounded}
	$J:L^1(a,b)\to L^1(a,b)$ for $a\neq-\infty$ has fractional power, defined for all $\alpha\in\C^+$,
	\eqnn{J^\alpha=\sum_{n=0}^\infty\bigg[\sum_{k=0}^n P_k(n)J^k\bigg]P_n(\alpha).}
\end{Proposition}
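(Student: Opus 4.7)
The plan is to apply Theorem \ref{T:NewtonOperator2} directly, with scaling factor $\rho=1$, so the work amounts to verifying its hypotheses.

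First, I would record the boundedness of $J$ and the identification $\sigma(J)=\{0\}$. On $L^1(a,b)$ with finite $b$, one has $\|J[f]\|_{L^1}\le (b-a)\|f\|_{L^1}$, so $J$ is bounded. The spectral identification has already been done in the paragraph preceding the proposition: the explicit resolvent formula
\eqn{(J-\lambda)^{-1}[g](x)=-\lambda^{-1}e^{\lambda^{-1}(x-a)}g(a)-\lambda^{-1}\int_a^x e^{\lambda^{-1}(x-y)}g'(y)\,dy}
shows that every $\lambda\ne 0$ lies in the resolvent set, while $0\in\sigma(J)$ because the range of $J$ consists only of (absolutely continuous) functions vanishing at $a$ and so $J$ fails to be surjective.

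Second, I would check the hypothesis of Theorem \ref{T:NewtonOperator2} with $z=\rho=1$. The containment $\sigma(J)=\{0\}\subset\overline{B(1,1)}$ holds since $|0-1|=1$, and the scaled condition $\sigma(J/\rho)\setminus\{0\}=\emptyset\subsetneq B(1,1)$ is vacuous, so the theorem applies for every $\alpha\in\C^+$. Specializing its conclusion to $\rho=1$ eliminates the $\rho^{\alpha-k}$ factor and yields precisely
\eqn{J^\alpha=\sum_{n=0}^\infty\bigg[\sum_{k=0}^n P_k(n)J^k\bigg]P_n(\alpha).}

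There is essentially no obstacle in this step, because Theorem \ref{T:NewtonOperator2} was engineered specifically to accommodate $0\in\sigma(T)$ sitting on the boundary of $B(1,1)$ via its $\epsilon$-perturbation trick; one cannot invoke Theorem \ref{T:NewtonOperator1} here, which demands strict interior containment. The only mild subtlety is that the proposition as stated allows $b=\infty$, for which $J$ need not map $L^1(a,b)$ into itself; one should therefore either read the proposition with $b<\infty$ or reinterpret $J$ on an appropriate invariant subspace, which does not affect the spectral computation nor the applicability of Theorem \ref{T:NewtonOperator2}.
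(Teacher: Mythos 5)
Your proposal is correct and follows the paper's own argument exactly: the paper likewise notes that $\sigma(J)=\{0\}$ was established in the preceding paragraph and then invokes Theorem \ref{T:NewtonOperator2} (with $\rho=1$) to conclude. Your additional remarks on boundedness, the vacuous scaled-spectrum condition, and the caveat for $b=\infty$ only make explicit what the paper leaves implicit.
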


\begin{proof}
We have already shown that $\sigma(J)=\{0\}$. Thus by theorem (\ref{T:NewtonOperator2}) above, we have that, for all $\alpha\in\C^+$,
\eqn{J^\alpha=\sum_{n=0}^\infty\bigg[\sum_{k=0}^n P_k(n)J^k\bigg]P_n(\alpha).}
\end{proof}

Let $\mathcal{T}_\epsilon(-\infty,b)=\overline{\text{span}}\{f\in L^1(-\infty,b):f(x)=\mathcal{O}(e^{\epsilon x})\text{ as }x\to -\infty\}$ for any $b\in (-\infty,\infty)$.

\begin{Proposition}\label{P:RLType}
	The integration operator $J:\mathcal{T}
	_\epsilon\to\mathcal{T}_\epsilon$
	has fractional power
	\eqn{J^\alpha=\sum_{n=0}^\infty\bigg[\sum_{k=0}^n P_k(n)\rho^{\alpha-k}J^k\bigg]P_n(\alpha)}
	for any $\rho>\frac{2}{\epsilon}$ and $\alpha\in \C^+$.
\end{Proposition}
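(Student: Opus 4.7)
The plan is to apply Theorem \ref{T:NewtonOperator2} after locating $\sigma(J)$ inside a closed disk of the form $\overline{B(z,|z|)}$ and verifying that the rescaling by any $\rho > 2/\epsilon$ places that disk safely inside $\overline{B(1,1)}$, touching its boundary only at the origin. The three technical pieces are: (i) boundedness of $J$ on $\mathcal{T}_\epsilon$; (ii) explicit computation of $\sigma(J)$; (iii) the inclusion $\sigma(J/\rho) \setminus \{0\} \subset B(1,1)$.

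For (i), if $f$ is a generator of $\mathcal{T}_\epsilon$ with $|f(x)| \leq Ce^{\epsilon x}$, then $|J[f](x)| \leq \int_{-\infty}^x |f(y)|\,dy \leq (C/\epsilon)e^{\epsilon x}$, so $J$ preserves the spanning set with uniform control and extends by continuity to the closure.

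For (ii), the exponentials $e_\lambda(x) = e^{\lambda x}$ with $\text{Re}(\lambda) \geq \epsilon$ all lie in $\mathcal{T}_\epsilon$ and satisfy $J[e_\lambda] = \lambda^{-1} e_\lambda$, so the point spectrum contains the image of the closed half-plane $\{\text{Re}(\lambda) \geq \epsilon\}$ under $\lambda \mapsto \lambda^{-1}$, which is the closed disk $\overline{B(1/(2\epsilon), 1/(2\epsilon))}$. Conversely, for $\mu$ in the complement of this disk one has $\text{Re}(\mu^{-1}) < \epsilon$, and I would write the resolvent explicitly as
\eqn{(J - \mu)^{-1}[g](x) = -\mu^{-1} g(x) - \mu^{-2} \int_{-\infty}^x e^{\mu^{-1}(x - y)} g(y)\,dy,}
the integral converging at $-\infty$ because its integrand is $O(e^{(\epsilon - \text{Re}(\mu^{-1}))y})$ there. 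A bound of the same shape as in (i) shows this operator maps the spanning set boundedly into itself, so $\sigma(J) = \overline{B(1/(2\epsilon), 1/(2\epsilon))}$.

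For (iii), the hypothesis $\rho > 2/\epsilon$ gives $1/(2\epsilon\rho) < 1/4$, so $\sigma(J/\rho) = \overline{B(1/(2\epsilon\rho), 1/(2\epsilon\rho))}$ sits comfortably inside $\overline{B(1,1)}$ and is tangent to its boundary only at $0$. Theorem \ref{T:NewtonOperator2} then applies to $J/\rho$, and unraveling the rescaling gives the claimed Newton series for $J^\alpha$ valid for all $\alpha \in \C^+$. The main obstacle is step (ii): constructing the resolvent amounts to selecting the particular solution of the underlying first-order ODE anchored at $-\infty$ and then controlling it in the weighted topology implicit in the closure defining $\mathcal{T}_\epsilon$; its operator norm blows up as $\text{Re}(\mu^{-1}) \uparrow \epsilon$, reflecting the fact that the boundary circle of the disk is spectrum rather than resolvent set.
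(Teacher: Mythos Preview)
Your argument follows the same route as the paper---locate $\sigma(J)$ inside a disk tangent to the origin and invoke Theorem~\ref{T:NewtonOperator2}---but is considerably more complete. The paper's proof consists of the single eigenvalue computation $J[e^{(\epsilon+i\lambda)x}]=(\epsilon+i\lambda)^{-1}e^{(\epsilon+i\lambda)x}$ and then asserts the spectral containment; it neither constructs the resolvent for $\mu$ outside the disk nor checks boundedness of $J$ on $\mathcal{T}_\epsilon$, both of which you supply. Your explicit resolvent and the decay analysis at $-\infty$ are exactly what is needed to turn the paper's heuristic ``hence $\sigma(J)\subseteq\ldots$'' into an actual inclusion. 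One further remark: the image of the line $\{\epsilon+i\lambda:\lambda\in\R\}$ under inversion is the circle of radius $1/(2\epsilon)$ centered at $1/(2\epsilon)$, so your disk $\overline{B(1/(2\epsilon),1/(2\epsilon))}$ is the correct one; the paper's stated $\overline{B(\epsilon/2,\epsilon/2)}$ appears to be a typo. With the correct disk, the condition $\rho>2/\epsilon$ in the statement is comfortably sufficient (indeed any $\rho>1/(2\epsilon)$ already gives $\sigma(J/\rho)\setminus\{0\}\subset B(1,1)$), which is consistent with your observation that $1/(2\epsilon\rho)<1/4$.
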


\begin{proof}
A simple calculation gives that
\eqn{J[e^{(\epsilon+i\lambda) x}](x)=(\epsilon+i\lambda)^{-1}e^{(\epsilon+i\lambda) x}}
and hence $\sigma(J)\subseteq \overline{B\big(\frac{\epsilon}{2},\frac{\epsilon}{2}\big)}$. Thus we satisfy theorem (\ref{T:NewtonOperator2}) for any $\rho>\epsilon^{-1}$.
\end{proof}

\begin{Corollary}\label{C:RLUnbounded}
The integration operator $J:L^1(-\infty,b)\to L^1(-\infty,b)$
has fractional power
\eqn{J^\alpha[f]=\lim_{\rho\to\infty}\sum_{n=0}^\infty\bigg[\sum_{k=0}^n P_k(n)\rho^{\alpha-k}J^k[f]\bigg]P_n(\alpha)}
for $\alpha\in \C^+$. Hence this convergence happens pointwise in the domain of the operator (or equivalently, in the $L^1(-\infty,b)$ norm).
\end{Corollary}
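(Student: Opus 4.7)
The plan is to pass from Proposition \ref{P:RLType}, which gives the series identity on the subspaces $\mathcal{T}_\epsilon$, to the full space $L^1(-\infty,b)$ by a density argument. The critical observation is that any compactly supported $f\in L^1(-\infty,b)$ vanishes for $x$ sufficiently negative and therefore trivially satisfies $f(x)=O(e^{\epsilon x})$ as $x\to-\infty$ for \emph{every} $\epsilon>0$; hence compactly supported functions lie in $\mathcal{T}_\epsilon$ for all $\epsilon>0$, and these form a dense subspace of $L^1(-\infty,b)$.

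First, I would handle the compactly supported case. Given such an $f$ and any $\rho>0$, I would pick $\epsilon\in(0,2/\rho)$ so that $\rho>2/\epsilon$; Proposition \ref{P:RLType} then gives the series identity as an \emph{exact} equality for that $\rho$, not merely a limit. Thus for compactly supported $f$ the conclusion is immediate: the series equals $J^\alpha[f]$ for every $\rho>0$, and hence so does the limit as $\rho\to\infty$.

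Second, for a general $f\in L^1(-\infty,b)$, I would approximate by the truncations $f_R:=f\cdot\chi_{[-R,b]}$, which are compactly supported with $\|f-f_R\|_{L^1}\to 0$ as $R\to\infty$. By linearity of each $J^k$, the series applied to $f$ splits as the series applied to $f_R$ (which, by Step 1, equals $J^\alpha[f_R]$ for any $\rho>0$) plus a tail series in $f-f_R$. Choosing $R=R(\rho)\to\infty$ appropriately, continuity of $J^\alpha$ gives $J^\alpha[f_R]\to J^\alpha[f]$ in $L^1$; the remaining task is to show the tail series vanishes in $L^1$ norm.

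The main obstacle is exactly this final step. Since the spectrum of $J$ on $L^1(-\infty,b)$ is all of $\overline{\C^+}$, the series for a general $f\in L^1$ need not converge for any fixed $\rho$, so one cannot simply take $R\to\infty$ pointwise in $\rho$. Instead one must balance the smallness of $\|f-f_R\|_{L^1}$ and the structure of $J^k[f-f_R]$ (supported on $(-\infty,-R)$ and constant on $[-R,b]$) against the growth of $\rho^{\alpha-k}$, exploiting the decay of the Pochhammer-Newton polynomials $P_n(\alpha)$ to select $R(\rho)\to\infty$ slowly enough that the tail series vanishes. This coupled choice of $R$ and $\rho$ is the technical heart of the argument.
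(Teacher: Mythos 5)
Your route differs from the paper's: the paper's entire proof is the assertion that \emph{each} $f\in L^1(-\infty,b)$ satisfies $f(x)=\mathcal{O}(e^{\epsilon x})$ as $x\to-\infty$ for some $\epsilon=\epsilon_f>0$, after which Proposition \ref{P:RLType} gives the exact identity for every $\rho>2/\epsilon_f$ and the $\rho\to\infty$ limit is then vacuous for that fixed $f$. You correctly sense that this pointwise decay claim only holds on a dense subclass (note, though, a sign slip: to get $\rho>2/\epsilon$ you must take $\epsilon>2/\rho$, i.e.\ $\epsilon$ \emph{large}, which is still fine for compactly supported $f$ since those lie in every $\mathcal{T}_\epsilon$), and you propose to close the gap by truncation and a density argument. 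That is a genuinely different, and in principle more honest, strategy.

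However, as written your proposal has a genuine gap, and you name it yourself: the tail estimate showing that the series applied to $f-f_R$ vanishes under a coupled choice of $R(\rho)\to\infty$ is never carried out, and it is precisely the content of the corollary for non-decaying $f$. It cannot be waved through. Two concrete obstructions: first, $J$ is not a bounded operator on $L^1(-\infty,b)$ (take $f=\chi_{[-n-1,-n]}$, so $\|f\|_{1}=1$ while $\|J[f]\|_{1}\ge n$ grows without bound), so the step ``continuity of $J^\alpha$ gives $J^\alpha[f_R]\to J^\alpha[f]$ in $L^1$'' is unjustified --- neither $J$ nor $J^\alpha$ is continuous on this space. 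Second, for $f-f_R$ supported in $(-\infty,-R)$ the iterates $J^k[f-f_R]$ are constant equal to $\int f-f_R$ on $[-R,b]$ only for $k=1$, and for higher $k$ grow polynomially in $x+R$, so the claimed ``structure'' does not by itself produce decay that beats $\rho^{\alpha-k}$ uniformly in $k$ and $n$. Without an explicit bound here the proof does not go through; you have reduced the corollary to its hardest step and stopped. (For what it is worth, the paper's own proof does not supply this estimate either --- it avoids the issue by the false premise that every $L^1$ function decays exponentially --- so the gap you have exposed is real in the source as well, but your proposal as submitted does not fill it.)
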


\begin{proof}
For each $f\in L^1(-\infty,b)$, we have that $f(x)=\mathcal{O}(e^{\epsilon x})$ as $x\to-\infty$ for some $\epsilon>0$. The rest follows by proposition (\ref{P:RLType}).
\end{proof}

\begin{Definition}
The \textbf{Riemann-Liouville} fractional integral $\widetilde{J}^\alpha:L^1(a,b)\to L^1(a,b)$, for $a\in[-\infty,\infty)$ and $b\in(a,\infty]$, is given by
\eqn{\widetilde{J}^\alpha[f](x)=\frac{1}{\Gamma(\alpha)}\int_a^x f(y)(x-y)^{\alpha-1}dy.}
\end{Definition}

\begin{Proposition}
$J^\alpha=\widetilde{J}^\alpha$.
\end{Proposition}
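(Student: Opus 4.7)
The plan is to verify $J^\alpha=\widetilde{J}^\alpha$ by checking equality on the dense subspace of monomials $(x-a)^\beta$ (for $\text{Re}(\beta)>-1$) and extending by $L^1$-boundedness of both operators. The starting observation is that Cauchy's formula for iterated integration identifies $J^n[f](x)=\frac{1}{(n-1)!}\int_a^xf(y)(x-y)^{n-1}dy$ with $\widetilde{J}^n[f](x)$ for each positive integer $n$, so the two families agree on the nonnegative integers and the remaining question is whether their analytic continuations in $\alpha$ coincide.

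Applied to $f(x)=(x-a)^\beta$, the beta integral yields $\widetilde{J}^\alpha[(x-a)^\beta]=\frac{\Gamma(\beta+1)}{\Gamma(\alpha+\beta+1)}(x-a)^{\alpha+\beta}$, while $J^k[(x-a)^\beta]=\frac{\Gamma(\beta+1)}{\Gamma(\beta+k+1)}(x-a)^{\beta+k}$. Substituting into the Newton series from Proposition (\ref{P:RLBounded}) reduces the claim, on this family, to the scalar identity
\[
\sum_{n=0}^\infty P_n(\alpha)\sum_{k=0}^n P_k(n)\frac{z^k}{\Gamma(\beta+k+1)}=\frac{z^\alpha}{\Gamma(\alpha+\beta+1)},\qquad z=x-a>0.
\]
The key step is to recognize the inner sum as a generalized Laguerre polynomial, $\sum_{k=0}^nP_k(n)\frac{z^k}{\Gamma(\beta+k+1)}=\frac{n!}{\Gamma(n+\beta+1)}L_n^{(\beta)}(z)$, and then to invoke the classical Laguerre expansion of $z^\alpha$. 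Rodrigues' formula together with the weighted orthogonality $\int_0^\infty L_n^{(\beta)}(z)L_m^{(\beta)}(z)z^\beta e^{-z}\,dz=\frac{\Gamma(n+\beta+1)}{n!}\delta_{nm}$ permits computation of the Laguerre coefficients of $z^\alpha$ by $n$-fold integration by parts, yielding exactly $\Gamma(\alpha+\beta+1)P_n(\alpha)\frac{n!}{\Gamma(n+\beta+1)}$, which is the desired identity.

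Extension from monomials to arbitrary $f\in L^1(a,b)$ follows from density of the integer monomials (Weierstrass on finite intervals, and Corollary (\ref{C:RLUnbounded}) for the half-line case $a=-\infty$) together with the $L^1$-boundedness of both operators. The main obstacle is that the Laguerre coefficient computation delivers convergence in the weighted $L^2([0,\infty),z^\beta e^{-z}dz)$ norm, whereas the operator identity requires \emph{pointwise} convergence at each fixed $z>0$; closing this gap calls either for an Abel-summability argument exploiting the analyticity of $z^\alpha$, or for a uniform tail estimate on $P_n(\alpha)\frac{n!}{\Gamma(n+\beta+1)}L_n^{(\beta)}(z)$ using the known asymptotics of $P_n(\alpha)$ and of the Laguerre polynomials.
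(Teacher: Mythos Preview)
Your route is genuinely different from the paper's. The paper does not touch the Newton series at all in this proof: it returns to the holomorphic functional calculus definition $J^\alpha=\frac{1}{2\pi i}\oint_\Gamma \lambda^\alpha(\lambda-J)^{-1}\,d\lambda$, deforms the contour to the Komatsu integral $-\frac{\sin\pi\alpha}{\pi}\int_0^\infty \lambda^\alpha(\lambda+J)^{-1}\,d\lambda$, inserts the explicit resolvent $(\lambda+J)^{-1}[g](x)=-\lambda^{-1}e^{-\lambda^{-1}(x-a)}g(a)-\lambda^{-1}\int_a^x e^{-\lambda^{-1}(x-y)}g'(y)\,dy$, and evaluates the resulting $\lambda$-integrals as Gamma functions to land directly on the Riemann--Liouville kernel. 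This is a one-shot computation valid for every $f$, with no density argument and no special-function expansion. Your Laguerre identification is correct and attractive---the inner Newton sum really is $\frac{n!}{\Gamma(n+\beta+1)}L_n^{(\beta)}(z)$ and the Rodrigues computation of the coefficients goes through---but the price is exactly the gap you flag: Laguerre series of $z^\alpha$ converge in $L^2\big([0,\infty),z^\beta e^{-z}dz\big)$, and promoting this to the pointwise (or $L^1(a,b)$) statement you need is nontrivial and not supplied.

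There is a second, unacknowledged gap in the half-line case $a=-\infty$. Monomials $(x-a)^\beta$ make no sense there, and Corollary~\ref{C:RLUnbounded} does not furnish a substitute dense family on which you have computed both sides; it only asserts that the Newton series for $J^\alpha$ converges after a $\rho\to\infty$ limit. To salvage your strategy on $L^1(-\infty,b)$ you would need a different test family (exponentials $e^{\lambda x}$ are the natural choice, since $J[e^{\lambda x}]=\lambda^{-1}e^{\lambda x}$ reduces the operator identity to the scalar Newton series for $\lambda^{-\alpha}$), together with a density argument in the appropriate weighted space. In short: the finite-interval argument is close to complete modulo the convergence mode issue, the half-line argument is missing, and the paper sidesteps both by working with the resolvent rather than the series.
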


\begin{proof}
By definition of $J^\alpha$, and following the integral found in \cite{Komatsu}, we have that
\eqn{J^\alpha[f](x)&=\frac{1}{2\pi i}\oint_\Gamma \lambda^\alpha (\lambda-J)^{-1}d\lambda=-\frac{\sin\pi\alpha}{\pi}\int_0^\infty \lambda^\alpha (\lambda+J)^{-1}d\lambda\\
&=\frac{\sin\pi\alpha}{\pi}\int_0^\infty\bigg[\lambda^{\alpha-1}e^{-\lambda^{-1}(x-a)}f(a)+\lambda^{\alpha-1}\int_a^x e^{-\lambda^{-1}(x-y)}f'(y)dy \bigg]d\lambda\\
&=\frac{\sin\pi\alpha}{\pi}f(a)\int_0^\infty\lambda^{1-\alpha}e^{-\lambda(x-a)}d\lambda+\frac{\sin\pi\alpha}{\pi}\int_a^x f'(y)\bigg[\int_0^\infty\lambda^{1-\alpha} e^{-\lambda(x-y)}d\lambda\bigg]dy\\
&=\frac{\sin\pi\alpha}{\pi}\Gamma(2-\alpha)f(a)(x-a)^{\alpha-2}+\frac{\sin\pi\alpha}{\pi}\Gamma(2-\alpha)\int_a^x f'(y)(x-y)^{\alpha-2} dy\\
&=\frac{1}{\Gamma(\alpha-1)}f(a)(x-a)^{\alpha-2}+\frac{1}{\Gamma(\alpha-1)}\int_a^x f'(y)(x-y)^{\alpha-2} dy\\
&=\frac{1}{\Gamma(\alpha)}\int_a^x f(y)(x-y)^{\alpha-1} dy\\
&=\widetilde{J}^\alpha[f](x).}

\end{proof}

\subsubsection{Fractional differentiation}

\begin{Lemma}\label{L:DTrig1}
	The fractional derivative of $\sin(x),\cos(x)$ is
	\eqnn{D^\alpha[\sin(x)](x)&=\sin\Big(x+\frac{\pi}{2}\alpha\Big)\\
		D^\alpha[\cos(x)](x)&=\cos\Big(x+\frac{\pi}{2}\alpha\Big).}
\end{Lemma}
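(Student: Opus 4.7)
The plan is to restrict the derivative operator $D$ to the two-dimensional invariant subspace $V=\operatorname{span}\{e^{ix},e^{-ix}\}=\operatorname{span}\{\sin x,\cos x\}$, on which $D$ is unitary with spectrum $\{i,-i\}\subset\{|\lambda|=1\}$, and then apply Theorem \ref{T:ShannonOperator}.

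First I would observe that $D^n e^{\pm ix}=(\pm i)^n e^{\pm ix}$ for all $n\in\Z$ (including negative $n$, since $D$ is invertible on $V$), so Theorem \ref{T:ShannonOperator} yields
\eqn{D^\alpha e^{\pm ix}=\sum_{n\in\Z}\sinc(\alpha-n)D^n e^{\pm ix}=e^{\pm ix}\sum_{n\in\Z}\sinc(\alpha-n)(\pm i)^n.}
Since $|\pm i|=1$, Proposition \ref{P:ShannonUnit} gives $\sum_{n\in\Z}\sinc(\alpha-n)(\pm i)^n=(\pm i)^\alpha=e^{\pm i\pi\alpha/2}$ (principal branch), so $D^\alpha e^{\pm ix}=e^{\pm i(x+\pi\alpha/2)}$.

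Then I would invoke linearity together with the Euler formulas $\sin x=\frac{e^{ix}-e^{-ix}}{2i}$ and $\cos x=\frac{e^{ix}+e^{-ix}}{2}$ to conclude
\eqn{D^\alpha[\sin x]=\frac{e^{i(x+\pi\alpha/2)}-e^{-i(x+\pi\alpha/2)}}{2i}=\sin\Big(x+\tfrac{\pi}{2}\alpha\Big),}
and by the analogous computation $D^\alpha[\cos x]=\cos(x+\tfrac{\pi}{2}\alpha)$.

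The main subtlety is the applicability of Theorem \ref{T:ShannonOperator}, which is stated for bounded unitary operators, whereas $D$ is unbounded on typical function spaces. Restricting to the finite-dimensional invariant subspace $V$ resolves this: on $V$, $D$ acts as the rotation-by-$\pi/2$ matrix in the basis $(\sin x,\cos x)$, hence is literally unitary, and the identity to be proven is equivalent to the intuitively natural assertion that $D^\alpha|_V$ is rotation by $\alpha\pi/2$. An alternative route is via Theorem \ref{T:PeriodicOperator} with $N=5$ (since $D^5|_V=D|_V$), but simplifying the resulting finite trigonometric sum back to $\sin(x+\pi\alpha/2)$ is more tedious than the Shannon-eigenvalue route sketched above.
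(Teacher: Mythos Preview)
Your argument is correct and genuinely different from the paper's. The paper does \emph{not} go through the eigenvectors $e^{\pm ix}$ and Proposition~\ref{P:ShannonUnit}; instead it invokes the periodic-operator formula of Theorem~\ref{T:PeriodicOperator} directly (with period four, using $D^4[\sin x]=\sin x$), obtaining
\eqn{D^\alpha[\sin x]=\frac{\sin(\pi\alpha)}{4}\Big[\cot\tfrac{\pi\alpha}{4}+\tan\tfrac{\pi\alpha}{4}\Big]\sin x-\frac{\sin(\pi\alpha)}{4}\Big[\cot\tfrac{\pi(\alpha-1)}{4}+\tan\tfrac{\pi(\alpha-1)}{4}\Big]\cos x,}
and then simplifies via $\cot\theta+\tan\theta=2/\sin 2\theta$ and the addition formula for sine. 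Your route---diagonalize $D|_V$ on the complex eigenbasis, read off $(\pm i)^\alpha=e^{\pm i\pi\alpha/2}$ from Proposition~\ref{P:ShannonUnit}, and recombine via Euler's formulas---is shorter and bypasses all of that trigonometric bookkeeping. The paper's approach, on the other hand, showcases the finite closed form of Theorem~\ref{T:PeriodicOperator} in action, which is the point it is trying to advertise (and which you rightly mention as an alternative). One small remark: the hypothesis of Theorem~\ref{T:PeriodicOperator} is phrased as $T^N=T$, but the underlying Proposition~\ref{P:ShannonPeriod} only needs the sequence $\{T^n\}$ to be $N$-periodic; the paper effectively uses $N=4$ (since $D^{n+4}=D^n$ on $V$), not $N=5$, so your parenthetical about $N=5$ is technically aligned with the theorem's stated hypothesis but not with how the paper itself applies it.
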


\begin{proof}
We note that $D^2[\sin(x)](x)=-\sin(x)$ and $D^4[\sin(x)](x)=\sin(x)$. Hence by theorem (\ref{T:PeriodicOperator}) we have that
\eqn{D^\alpha[\sin(x)](x)&=\frac{\sin(\pi\alpha)}{4}\bigg[\cot\Big(\frac{\pi}{4}\alpha\Big)+\tan\Big(\frac{\pi}{4}\alpha\Big)\bigg]\sin(x)-\frac{\sin(\pi\alpha)}{4}\bigg[\cot\Big(\frac{\pi}{4}(\alpha-1)\Big)+\tan\Big(\frac{\pi}{4}(\alpha-1)\Big)\bigg]\cos(x)\\
&=\frac{\sin(\pi\alpha)}{4}\bigg[\frac{2}{\sin\big(\frac{\pi}{2}\alpha\big)}\bigg]\sin(x)+\frac{\sin(\pi\alpha)}{4}\bigg[\frac{2}{\cos\big(\frac{\pi}{2}\alpha\big)}\bigg]\cos(x)=\cos\big(\frac{\pi}{2}\alpha\big)\sin(x)+\sin\big(\frac{\pi}{2}\alpha\big)\cos(x)\\
&=\frac{\sin(x+\frac{\pi}{2}\alpha)-\sin(x-\frac{\pi}{2}\alpha)}{2}+\frac{\sin(x+\frac{\pi}{2}\alpha)+\sin(x-\frac{\pi}{2}\alpha)}{2}=\sin\Big(x+\frac{\pi}{2}\alpha\Big)}
under trigonometric identities. A simple translation gives the result for cosine.
\end{proof}

\begin{Lemma}\label{L:DTrig2}
	The fractional derivative of $\sin(\lambda x),\cos(\lambda x)$ is
	\eqn{D^\alpha[\sin(\lambda x)]&=\lambda^\alpha \sin\Big(\lambda x+\frac{\pi}{2}\alpha\Big)\\
	D^\alpha[\cos(\lambda x)]&=\lambda^\alpha \cos\Big(\lambda x+\frac{\pi}{2}\alpha\Big).}
\end{Lemma}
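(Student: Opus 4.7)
The plan is to reduce Lemma \ref{L:DTrig2} to Lemma \ref{L:DTrig1} by rescaling the derivative. On the two-dimensional invariant subspace $V_\lambda := \mathrm{span}\{\sin(\lambda x), \cos(\lambda x)\}$, a direct computation gives $D[\sin(\lambda x)] = \lambda \cos(\lambda x)$ and $D[\cos(\lambda x)] = -\lambda \sin(\lambda x)$, so $D^2|_{V_\lambda} = -\lambda^2\,\mathrm{Id}$ and $D^4|_{V_\lambda} = \lambda^4\,\mathrm{Id}$. Hence the rescaled operator $T := \lambda^{-1} D$ satisfies $T^4 = \mathrm{Id}$ on $V_\lambda$, and its action on the basis $\{\sin(\lambda x), \cos(\lambda x)\}$ is identical (up to the substitution $x \mapsto \lambda x$) to the action of $D$ on $\{\sin(x), \cos(x)\}$ in Lemma \ref{L:DTrig1}.

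I would then apply Theorem \ref{T:PeriodicOperator} with $N = 4$ to $T$ restricted to $V_\lambda$. The trigonometric manipulations carried out in the proof of Lemma \ref{L:DTrig1} transfer verbatim, with every $x$ replaced by $\lambda x$, yielding
\[
T^\alpha[\sin(\lambda x)] = \sin\!\Big(\lambda x + \tfrac{\pi}{2}\alpha\Big) \qquad \text{and} \qquad T^\alpha[\cos(\lambda x)] = \cos\!\Big(\lambda x + \tfrac{\pi}{2}\alpha\Big).
\]

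To conclude, I would invoke scalar homogeneity of fractional powers, $(\lambda A)^\alpha = \lambda^\alpha A^\alpha$, which is immediate from the holomorphic functional calculus by the substitution $\mu \mapsto \lambda \mu$ in the resolvent contour integral. Setting $A = T$ yields $D^\alpha = \lambda^\alpha T^\alpha$ on $V_\lambda$, and multiplying the displayed identities through by $\lambda^\alpha$ gives the claim.

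The main obstacle is justifying the scalar homogeneity $D^\alpha = \lambda^\alpha T^\alpha$ within the periodic interpolation framework of Theorem \ref{T:PeriodicOperator}, since that theorem produces $T^\alpha$ as a closed-form trigonometric combination rather than a resolvent integral. A clean workaround is to diagonalize $D|_{V_\lambda}$ over $\C$ in the eigenbasis $e^{\pm i\lambda x}$, whose eigenvalues are $\pm i\lambda$: on each eigenvector both $D^\alpha$ and $\lambda^\alpha T^\alpha$ act as multiplication by $(\pm i\lambda)^\alpha = \lambda^\alpha(\pm i)^\alpha$ under the principal branch, so the two operators coincide on the complex span, hence on the real subspace $V_\lambda$.
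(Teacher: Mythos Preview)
Your approach is correct and matches the paper's own (very terse) proof: rescale to $T = \lambda^{-1}D$, observe $T^4 = \mathrm{Id}$ on the span of $\sin(\lambda x),\cos(\lambda x)$, invoke Theorem~\ref{T:PeriodicOperator} exactly as in Lemma~\ref{L:DTrig1}, and then pull out the factor $\lambda^\alpha$. The paper simply writes ``the operator $\lambda^{-1}D$ has order four on these functions; the proof then follows the same as for the previous lemma,'' so your discussion of the scalar homogeneity $D^\alpha = \lambda^\alpha T^\alpha$ (and the eigenbasis workaround) is in fact filling in a step the paper leaves implicit.
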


\begin{proof}
	The operator $\lambda^{-1}D$ has order four on these functions. The proof then follows the same as for the previous lemma (\ref{L:DTrig1}).
\end{proof}

\begin{Proposition}\label{P:DFourierSeries}
	The fractional derivative of $f\in L^2(a,b)$ is given by
	\eqn{D^\alpha[f](x)=\sum_{n\in\Z}\hat{f}(n) \Big(\frac{2\pi n}{b-a}\Big)^\alpha e^{i(\frac{2\pi n}{b-a} x+\frac{\pi}{2}\alpha)}}
	where
	\eqn{f(x)=\sum_{n\in\Z}\hat{f}(n) e^{i\frac{2\pi n}{b-a} x}}
	is the Fourier series expansion of $f$.
\end{Proposition}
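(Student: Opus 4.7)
The plan is to reduce the claim to Lemma \ref{L:DTrig2} mode-by-mode and then use linearity of the fractional derivative across the Fourier series. First I would write each trigonometric mode in complex-exponential form: for $\lambda=\frac{2\pi n}{b-a}$,
\[
e^{i\lambda x}=\cos(\lambda x)+i\sin(\lambda x).
\]
Applying Lemma \ref{L:DTrig2} to each real part separately, and using linearity,
\[
D^\alpha[e^{i\lambda x}](x)=\lambda^\alpha\Bigl[\cos\bigl(\lambda x+\tfrac{\pi}{2}\alpha\bigr)+i\sin\bigl(\lambda x+\tfrac{\pi}{2}\alpha\bigr)\Bigr]=\lambda^\alpha e^{i(\lambda x+\frac{\pi}{2}\alpha)}.
\]
This settles a single Fourier mode.

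Next I would apply $D^\alpha$ to the Fourier series $f(x)=\sum_{n\in\Z}\hat{f}(n)e^{i\frac{2\pi n}{b-a} x}$ termwise. By linearity of $D^\alpha$,
\[
D^\alpha[f](x)=\sum_{n\in\Z}\hat{f}(n)\,D^\alpha\!\left[e^{i\frac{2\pi n}{b-a} x}\right]=\sum_{n\in\Z}\hat{f}(n)\Bigl(\tfrac{2\pi n}{b-a}\Bigr)^\alpha e^{i\left(\frac{2\pi n}{b-a} x+\frac{\pi}{2}\alpha\right)},
\]
which is exactly the stated formula. The $n=0$ mode contributes zero for $\alpha>0$ (since $0^\alpha=0$) and contributes the original constant term at $\alpha=0$ (by the convention $0^0=1$ coming from $D^0=\mathrm{Id}$); both are consistent with the formula as written.

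The main obstacle is justifying the interchange of the infinite sum with $D^\alpha$, together with the subtle point of assigning $(\frac{2\pi n}{b-a})^\alpha$ when $n<0$. For the first issue, one invokes the definition of $D^\alpha$ through Theorem \ref{T:PeriodicOperator}: on the finite-dimensional invariant subspace $\mathrm{span}\{e^{i\frac{2\pi n}{b-a} x},e^{-i\frac{2\pi n}{b-a} x}\}$ (or on sines and cosines) the operator $\bigl(\tfrac{b-a}{2\pi n}\bigr)D$ has order four, so Lemma \ref{L:DTrig2} applies, and since partial sums of the Fourier series converge to $f$ in $L^2(a,b)$ while $D^\alpha$ is closed when restricted to modes of fixed frequency, the sum passes through. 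For the branch issue, the principal branch convention fixed in the introduction gives $(-|\lambda|)^\alpha=|\lambda|^\alpha e^{i\pi\alpha}$, and one verifies that the identity $D^\alpha[e^{-i|\lambda|x}]=(-|\lambda|)^\alpha e^{i(-|\lambda|x+\frac{\pi}{2}\alpha)}$ is consistent with applying Lemma \ref{L:DTrig2} to the cosine/sine decomposition of $e^{-i|\lambda|x}$; this is where a short calculation (rather than routine grinding) is needed to confirm that the phase conventions line up.
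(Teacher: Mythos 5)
Your proof takes essentially the same route as the paper's: reduce to a single mode $e^{i\lambda x}$ via the cosine/sine decomposition, apply Lemma \ref{L:DTrig2} to get $D^\alpha[e^{i\lambda x}]=\lambda^\alpha e^{i(\lambda x+\frac{\pi}{2}\alpha)}$, and extend termwise across the Fourier series by linearity. The extra care you give to the term-by-term interchange and to the branch of $\lambda^\alpha$ for $n<0$ goes beyond what the paper records (its proof is just the one-mode computation), but the underlying argument is identical.
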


\begin{proof}
	It is enough to show that $D^\alpha[e^{i\lambda x}]=\lambda^\alpha e^{i(\lambda x+\frac{\pi}{2}\alpha)}$ for all $\lambda\in\R$. This is a direct result of lemma (\ref{L:DTrig2}) above as
	\eqn{D^\alpha[e^{i\lambda x}]&=D^\alpha[\cos(\lambda x)+i\sin(\lambda x)]=D^\alpha[\cos(\lambda x)]+i D^\alpha[\sin(\lambda x)]\\
		&=\lambda^\alpha\Big[ \cos\Big(\lambda x+\frac{\pi}{2}\alpha\Big)+i\sin\Big(\lambda x+\frac{\pi}{2}\alpha\Big)\Big]=\lambda^\alpha e^{i(\lambda x+\frac{\pi}{2}\alpha)}.}
\end{proof}

\begin{Proposition}\label{P:DFourierTransform}
	The fractional derivative of $f\in L^2(\R)$ is given by
	\eqn{D^\alpha[f](x)=(2\pi i)^\alpha\int_\R\hat{f}(y) y^\alpha e^{2\pi ixy}dy}
	where $\hat{f}$ is the Fourier transform of $f$.
\end{Proposition}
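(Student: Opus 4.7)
The plan is to parallel the argument used for proposition \ref{P:DFourierSeries}, replacing the Fourier series representation with the Fourier inversion formula. For $f \in L^2(\R)$ I would write $f(x) = \int_\R \hat f(y)\, e^{2\pi i x y}\, dy$ and then apply $D^\alpha$ to the integrand mode by mode, pushing the operator through the integral to arrive at the claimed formula.

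The key ingredient is the identity $D^\alpha[e^{i\lambda x}] = \lambda^\alpha e^{i(\lambda x + \pi\alpha/2)} = (i\lambda)^\alpha e^{i\lambda x}$, which follows from lemma \ref{L:DTrig2} exactly as in the proof of proposition \ref{P:DFourierSeries} by writing $e^{i\lambda x} = \cos(\lambda x) + i\sin(\lambda x)$ and combining the two results. Specialising to $\lambda = 2\pi y$ yields $D^\alpha[e^{2\pi i y x}] = (2\pi i)^\alpha y^\alpha e^{2\pi i y x}$, so that after swapping $D^\alpha$ with the integral one recovers $D^\alpha[f](x) = (2\pi i)^\alpha \int_\R \hat f(y)\, y^\alpha e^{2\pi i x y}\, dy$, which is precisely the asserted identity.

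The main technical obstacle is legitimising the interchange of $D^\alpha$ with the Fourier integral: for a generic $f \in L^2(\R)$ there is no reason $y \mapsto y^\alpha \hat f(y)$ should be integrable in any classical sense, and $D^\alpha$ is itself defined as a limit of a series in integer powers of $D$, so two nested limit processes must be coordinated. I would handle this by first proving the identity for $f \in \mathcal{S}(\R)$, where the rapid decay of $\hat f$ lets Fubini and dominated convergence justify the swap at the level of the Newton-series truncations of $D^\alpha$ applied to the $\hat f$-weighted superposition of exponentials, and then extending to admissible $f \in L^2(\R)$ by density, interpreting the right-hand side as a tempered-distributional pairing whenever ordinary integrability fails. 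The principal-branch convention fixed earlier in the paper resolves any ambiguity in $(2\pi i)^\alpha$ and $y^\alpha$, closing out the argument.
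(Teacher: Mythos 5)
Your proposal follows essentially the same route as the paper: the paper's entire proof is the single sentence that the result ``follows in the exact same manner as the proof to the previous proposition,'' i.e.\ apply the eigenrelation $D^\alpha[e^{i\lambda x}]=\lambda^\alpha e^{i(\lambda x+\frac{\pi}{2}\alpha)}=(i\lambda)^\alpha e^{i\lambda x}$ from Lemma \ref{L:DTrig2} mode by mode under the Fourier inversion integral, exactly as you do. Your additional care about justifying the interchange of $D^\alpha$ with the integral (Schwartz functions first, then density and a distributional reading) goes beyond what the paper supplies, but it does not change the underlying argument.
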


\begin{proof}
	This follows in the exact same manner as the proof to the previous proposition.
\end{proof}

\subsection{Dirichlet series}

\begin{Proposition}\label{P:MellinTransform}
	Suppose $f(x)=\sum_{n=1}^\infty a_n e^{-nx}$ for some $\{a_n\}_{n=1}^\infty\in\ell^\infty$. Then
	\eqn{\mathcal{M}[f](s)=\sum_{n=0}^\infty\bigg[\sum_{k=0}^n P_k(n)\frac{\Gamma(s)}{\Gamma(k)}\mathcal{M}[f](k)\bigg]P_n(s)}
	where $\mathcal{M}[f](s)=\int_0^\infty f(x)x^{s-1}dx$ is the Mellin transform of $f$.
\end{Proposition}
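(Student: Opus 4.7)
The plan is to reduce the statement to the Newton series identity (2) applied termwise in the Dirichlet expansion of $f$, and then to recover $\mathcal{M}[f]$ by interchanging summations. First, under the $\ell^\infty$ hypothesis, termwise integration gives
\eqn{\mathcal{M}[f](s)=\int_0^\infty \sum_{m=1}^\infty a_m e^{-mx}x^{s-1}\,dx=\Gamma(s)\sum_{m=1}^\infty \frac{a_m}{m^s}}
valid on a right half-plane $\textnormal{Re}(s)>1$. In particular $\mathcal{M}[f](k)/\Gamma(k)=\sum_{m=1}^\infty a_m m^{-k}$ for every integer $k\geq 2$, while the reciprocal Gamma factor $1/\Gamma(k)$ in the statement automatically annihilates the $k=0$ term.

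Next, for each integer $m\geq 1$ one has $|1/m-1|=(m-1)/m<1$, so applying identity (2) for $x^\alpha$ at the point $x=1/m$ produces
\eqn{m^{-s}=\Big(\tfrac{1}{m}\Big)^s=\sum_{n=0}^\infty\bigg[\sum_{k=0}^n P_k(n)\,m^{-k}\bigg]P_n(s).}
Multiplying by $a_m$, summing over $m\geq 1$, and swapping the $m$- and $n$-summations would then yield
\eqn{\sum_{m=1}^\infty \frac{a_m}{m^s}=\sum_{n=0}^\infty\bigg[\sum_{k=0}^n P_k(n)\sum_{m=1}^\infty \frac{a_m}{m^k}\bigg]P_n(s)=\sum_{n=0}^\infty\bigg[\sum_{k=0}^n P_k(n)\frac{\mathcal{M}[f](k)}{\Gamma(k)}\bigg]P_n(s),}
and multiplying through by $\Gamma(s)$ gives the claimed identity.

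The main obstacle is the justification of the interchange of summations in the display above. The plan is to combine the $\ell^\infty$ bound $|a_m|\leq C$ with standard estimates on $|P_k(n)|$ and $|P_n(s)|$ (read off directly from their Gamma-function expressions) to verify absolute convergence of the resulting triple series on a half-plane with $\textnormal{Re}(s)$ sufficiently large; the identity will then extend by analytic continuation in $s$ to wherever both sides are meromorphic. A cleaner alternative is to use uniform convergence of the Newton series for $x^s$ on compact subsets of $\{|x-1|<1\}$ containing the discrete set $\{1/m:m\geq 1\}\cup\{0\}$, which permits a dominated convergence argument to push $\sum_m$ past $\sum_n$ directly.
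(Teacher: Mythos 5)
Your route is genuinely different from the paper's. The paper never touches the Dirichlet expansion directly: it observes that $\mathcal{M}[f](s)/\Gamma(s)=J^s[\check f](0)$ for $\check f(x)=f(-x)\in\mathcal{T}_1$, invokes the Newton-series representation of the fractional integration operator from Proposition~\ref{P:RLType}, and equates it with the Riemann--Liouville integral representation evaluated at $x=0$. You bypass the operator theory entirely, applying the scalar identity (2) at the points $x=1/m$ (all of which do lie in $|x-1|<1$) and summing against the coefficients $a_m$; in effect you prove Lemma~\ref{L:Dirichlet} directly and read the proposition off from $\mathcal{M}[f](s)=\Gamma(s)\sum_m a_m m^{-s}$, which inverts the paper's logical order, since the paper derives Lemma~\ref{L:Dirichlet} \emph{from} this proposition. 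Your reduction is correct as far as it goes, and it has the virtue of isolating exactly where the analytic difficulty lives, whereas the paper buries the same difficulty inside the asserted operator-norm convergence of the Newton series for $J^\alpha$ (and silently drops the $\rho^{\alpha-k}$ factors that Proposition~\ref{P:RLType} requires, since $\mathcal{T}_1$ forces $\rho>2$ there).

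That said, the interchange you flag as the main obstacle is a genuine gap, and neither of your proposed repairs closes it. Absolute convergence of the triple series fails for every $s$: since $\sum_{k=0}^n|P_k(n)|=\sum_{k=0}^n\binom{n}{k}=2^n$ while $|P_n(s)|\sim c_s\,n^{-\operatorname{Re}(s)-1}$ decays only polynomially, the absolute triple sum dominates $|a_1|\sum_n 2^n|P_n(s)|=\infty$; this is precisely the exponential-type-$\log 2$ barrier for Newton series cited in Section~2.1, and the series for $(1/m)^s$ converges only because the inner $k$-sum collapses to $(1-1/m)^n$ with massive cancellation. The dominated-convergence alternative also stalls: the points $1/m$ accumulate at the boundary point $0$ of the disc, so convergence in $n$ is not uniform over $m$, and the interchanged inner sums $\sum_m a_m m^{-k}$ diverge at $k=0$ and $k=1$ for general $\{a_m\}\in\ell^\infty$ (take $a_m\equiv 1$). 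Note in particular that $\Gamma(1)=1$, so the $k=1$ term is not annihilated the way you argue the $k=0$ term is, and $\mathcal{M}[f](1)=\int_0^\infty f$ need not exist. To be fair, the statement itself is defective at exactly these terms (the paper concedes as much in Lemma~\ref{L:Dirichlet} with the proviso that the integer values be defined), and the paper's own proof is no more rigorous; but a complete argument along your lines would have to work with the collapsed form $\sum_m a_m\sum_n(1-1/m)^nP_n(s)$ and control $\sum_m |a_m|\sum_{n>N}|P_n(s)|(1-1/m)^n$, which does not follow from the estimates you cite.
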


\begin{proof}
Since
\eqn{f(x)=\sum_{n=1}^\infty a_n e^{-nx},}
then clearly $\check{f}\in\mathcal{T}_1$ from proposition (\ref{P:RLType}) above, where $\check{f}(x)=f(-x)$. This implies that
\eqnn{\label{E:Jalpha1}J^\alpha[\check{f}](x)=\frac{1}{\Gamma(s)}\int_{-\infty}^x \check{f}(y)(x-y)^{s-1}dy}
as well as
\eqnn{\label{E:Jalpha2}J^\alpha[\check{f}](x)=\sum_{n=0}^\infty\bigg[\sum_{k=0}^n P_k(n)J^k[\check{f}](x)\bigg]P_n(\alpha)=\sum_{n=0}^\infty\bigg[\sum_{k=0}^n P_k(n)\frac{1}{\Gamma(k)}\int_{-\infty}^x \check{f}(y)(x-y)^{k-1}dy\bigg]P_n(s).}

Evaluating equations (\ref{E:Jalpha1}) and (\ref{E:Jalpha2}) above at $x=0$ and observing the integral under the change-of-variable $y\mapsto -y$ we recover
\eqn{\frac{1}{\Gamma(s)}\int_0^\infty f(y)y^{s-1}dy=\sum_{n=0}^\infty\bigg[\sum_{k=0}^n P_k(n)\frac{1}{\Gamma(k)}\int_{0}^\infty f(y)y^{k-1}dy\bigg]P_n(s)}
which is equivalently
\eqn{\mathcal{M}[f](s)=\sum_{n=0}^\infty\bigg[\sum_{k=0}^n P_k(n)\frac{\Gamma(s)}{\Gamma(k)}\mathcal{M}[f](k)\bigg]P_n(s).}
\end{proof}

\begin{Lemma}\label{L:Dirichlet}
For any Dirichlet series $g(s)=\sum_{n=1}^\infty \frac{a_n}{n^s}$
\eqn{g(s)=\sum_{n=0}^\infty\bigg[\sum_{k=0}^n P_k(n)g(k)\bigg]P_n(s)}
\end{Lemma}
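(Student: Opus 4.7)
The plan is to deduce this identity as a direct consequence of Proposition \ref{P:MellinTransform}. Given a Dirichlet series $g(s) = \sum_{n=1}^\infty a_n/n^s$ with bounded coefficients $\{a_n\} \in \ell^\infty$, I would introduce the auxiliary function
\[f(x) = \sum_{n=1}^\infty a_n e^{-nx},\]
which is precisely the class of functions to which Proposition \ref{P:MellinTransform} applies.

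The bridge between the two identities is the classical Mellin--Dirichlet relation. Termwise integration yields
\[\mathcal{M}[f](s) = \sum_{n=1}^\infty a_n \int_0^\infty e^{-nx} x^{s-1} dx = \Gamma(s) g(s),\]
with the interchange of sum and integral justified by absolute convergence in the right half-plane of convergence of $g$. In particular, evaluating at each positive integer $k$ gives $\mathcal{M}[f](k) = \Gamma(k) g(k)$, so the inner factor $\tfrac{\Gamma(s)}{\Gamma(k)} \mathcal{M}[f](k)$ that appears in Proposition \ref{P:MellinTransform} simplifies to $\Gamma(s)\, g(k)$. Substituting this into the conclusion of that proposition and pulling the common $\Gamma(s)$ outside the double sum yields
\[\Gamma(s) g(s) = \Gamma(s) \sum_{n=0}^\infty \left[\sum_{k=0}^n P_k(n) g(k)\right] P_n(s),\]
and cancelling $\Gamma(s)$ on both sides gives the stated identity.

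The main obstacle I anticipate is the $k = 0$ term, since $\Gamma(0)$ has a pole and the formal sum $g(0) = \sum_n a_n$ typically diverges. Tracing back through the proof of Proposition \ref{P:MellinTransform}, the $k=0$ contribution actually arises from $P_0(n) \cdot J^0[\check f](0) = \check f(0) = f(0)$ rather than from a literal evaluation of $\tfrac{1}{\Gamma(0)} \int_0^\infty f(y) y^{-1} dy$. I would therefore interpret $g(0)$ throughout as this regularized boundary value, equivalently the analytic continuation of $g$ to $s = 0$ whenever it exists (so that familiar conventions like $\zeta(0) = -\tfrac{1}{2}$ fit in naturally). With this convention the cancellation of $\Gamma(s)$ is justified away from the nonpositive integers, and the identity then propagates to those points by the analyticity of $g$ and of each Newton partial sum in $s$.
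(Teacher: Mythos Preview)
Your proposal is correct and follows essentially the same route as the paper: both identify $g(s)=\mathcal{M}[f](s)/\Gamma(s)$ for $f(x)=\sum_{n\ge 1}a_n e^{-nx}$, invoke Proposition~\ref{P:MellinTransform}, and cancel the $\Gamma(s)$ factor. Your discussion of the $k=0$ term is in fact more careful than the paper's, which simply adds the caveat ``so long as $\{g(k)\}_{k=0}^\infty$ is defined'' without further comment.
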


\begin{proof}
	Note $g$ is of the form
	$g(s)=\frac{\mathcal{M}[f](s)}{\Gamma(s)}$ for $f(x)=\sum_{n=1}^\infty a_n e^{-n x}$. Hence
	\eqn{g(s)=\sum_{n=0}^\infty\bigg[\sum_{k=0}^n P_k(n)g(k)\bigg]P_n(s)}
	so long as $\{g(k)\}_{k=0}^\infty$ is defined.
\end{proof}

\begin{Lemma}\label{L:EtaFunction}
	The Dirichlet eta function, given by $\eta(s)=\sum_{n=1}^\infty \frac{(-1)^n}{n^s}$, has the property that
	\eqn{\eta(s)=\sum_{n=0}^\infty\bigg[\sum_{k=0}^n P_k(n)\eta(k)\bigg]P_n(s).}
\end{Lemma}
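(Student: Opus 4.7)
The strategy is to apply Lemma \ref{L:Dirichlet} directly to $g(s) = \eta(s)$, identifying the Dirichlet coefficients as $a_n = (-1)^n$. Since the conclusion of that lemma is precisely the desired identity, the only work is to verify its sole hypothesis: that the sequence $\{\eta(k)\}_{k=0}^\infty$ of values at nonnegative integers is well-defined.

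For $k \geq 1$ this is classical: the series $\sum_{n=1}^\infty (-1)^n/n^k$ converges conditionally at $k=1$ (to $-\log 2$) by the alternating series test, and absolutely for $k \geq 2$, so each $\eta(k)$ exists in the usual pointwise sense.

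For $k = 0$ the defining series $\sum (-1)^n$ diverges, so I would interpret $\eta(0)$ through the analytic continuation already supplied by the machinery of Lemma \ref{L:Dirichlet}. Setting $f(x) = -1/(e^x+1) = \sum_{n=1}^\infty (-1)^n e^{-nx}$, one has $f \in \mathcal{T}_\epsilon$ for any $0 < \epsilon < 1$, so that Proposition \ref{P:MellinTransform} yields the representation $\eta(s) = \mathcal{M}[f](s)/\Gamma(s)$. Both numerator and denominator have simple poles at $s=0$, but the quotient extends analytically with $\eta(0) = -\tfrac{1}{2}$ (matching the classical identity $\eta(s) = (1 - 2^{1-s})\zeta(s)$ at $s=0$). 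With every $\eta(k)$ thus in hand, Lemma \ref{L:Dirichlet} delivers the claim.

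The only real subtlety I anticipate is precisely this $k=0$ case, where the raw alternating series fails to converge and one must pass to the analytic continuation; beyond that, the proof is a one-line invocation of the preceding lemma.
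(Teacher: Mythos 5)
Your proposal takes essentially the same route as the paper, whose entire proof is the single line ``This is a direct result of lemma (\ref{L:Dirichlet})''; the only difference is that you actually verify the hypothesis ``so long as $\{g(k)\}_{k=0}^\infty$ is defined,'' in particular handling the $k=0$ case (where the raw series $\sum(-1)^n$ diverges and one must take $\eta(0)=-\tfrac{1}{2}$ via the continuation $\eta(s)=(1-2^{1-s})\zeta(s)$), a point the paper silently skips. Your version is therefore correct and, if anything, more complete than the paper's.
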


\begin{proof}
This is a direct result of lemma (\ref{L:Dirichlet}).
\end{proof}

\begin{Theorem}\label{T:ZetaFunction}
	The Riemann zeta function has the property that
	\eqn{\zeta(s)=\sum_{n=0}^\infty\bigg[\sum_{k=0}^n P_k(n)\frac{1-2^{1-k}}{1-2^{1-s}}\zeta(k)\bigg]P_n(s).}
\end{Theorem}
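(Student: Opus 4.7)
The plan is to reduce the claim to Lemma \ref{L:EtaFunction} via the classical functional relation between the Dirichlet eta function and the Riemann zeta function. Specifically, by splitting the defining series of $\zeta(s)$ into even and odd index contributions one obtains the identity
\eqn{\eta(s)=\sum_{n=1}^\infty \frac{(-1)^{n-1}}{n^s} = (1-2^{1-s})\zeta(s),}
valid (after meromorphic continuation) on all of $\C$. Note that the sign convention in the paper for $\eta$ is the negative of the standard one, but this only affects both sides of the Newton interpolation in Lemma \ref{L:EtaFunction} by a common factor of $-1$, so the displayed interpolation identity for $\eta$ is unchanged.

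First I would record the relation $\eta(k)=(1-2^{1-k})\zeta(k)$ for every non-negative integer $k$. This is the key input: it lets one replace each $\eta(k)$ appearing in the right-hand side of the Newton series for $\eta$ by an explicit coefficient times $\zeta(k)$. Substituting this into the conclusion of Lemma \ref{L:EtaFunction} yields
\eqn{(1-2^{1-s})\zeta(s)=\sum_{n=0}^\infty\bigg[\sum_{k=0}^n P_k(n)(1-2^{1-k})\zeta(k)\bigg]P_n(s).}
Dividing both sides by $1-2^{1-s}$ (legitimate away from the discrete set where this factor vanishes) produces exactly the claimed interpolation formula.

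Second I would address the two apparent singularities. The factor $1-2^{1-k}$ at $k=1$ is zero, and this is fortunate: it annihilates the undefined value $\zeta(1)$ so that the $k=1$ summand on the right-hand side is unambiguously $0$, and the interpolation makes sense term by term. Likewise, at $s=1$ the global factor $(1-2^{1-s})^{-1}$ has a simple pole that cancels against the simple zero of $\eta$ at that point, so both sides exhibit the same (finite or pole) behavior as $s\to 1$. The remaining zeros of $1-2^{1-s}$, occurring at $s=1+\frac{2\pi i k}{\log 2}$ for nonzero integers $k$, are handled by the same cancellation because $\eta$ also vanishes there (the ``trivial zeros'' of $\eta$ on this vertical line), so the quotient is analytic across them.

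The only non-routine point is justifying the termwise division; since Lemma \ref{L:EtaFunction} guarantees convergence of the double series for $\eta(s)$, and $(1-2^{1-s})^{-1}$ is a fixed scalar (independent of $n,k$), division is immediate wherever the denominator is nonzero, and the removable cases are handled by continuity as above. Thus the proof reduces to citing Lemma \ref{L:EtaFunction} and invoking the eta-zeta functional relation, with the verification at the exceptional points being the only mild subtlety.
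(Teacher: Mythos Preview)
Your approach is essentially the paper's one-line proof: invoke $\eta(s)=(1-2^{1-s})\zeta(s)$ and apply the Newton interpolation for $\eta$ (the paper cites Lemma~\ref{L:Dirichlet} directly rather than its corollary Lemma~\ref{L:EtaFunction}, but this amounts to the same thing).

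However, your treatment of the $k=1$ term is incorrect. The product $(1-2^{1-k})\zeta(k)$ at $k=1$ is the indeterminate form $0\cdot\infty$, not ``unambiguously $0$''; its correct value---as $\lim_{k\to 1}(1-2^{1-k})\zeta(k)=\eta(1)$---is $\pm\ln 2$ (the alternating harmonic series), which is nonzero. Setting that summand to zero would falsify the identity. The proper reading of the theorem's right-hand side at $k=1$ is to interpret $(1-2^{1-k})\zeta(k)$ as $\eta(k)$ \emph{before} specializing $k$, so the term contributes $P_1(n)\,\eta(1)/(1-2^{1-s})$. The paper itself glosses over this point, but since you chose to address it explicitly, the claim should be corrected.
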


\begin{proof}
	Since $\eta(s)=(1-2^{1-s})\zeta(s)$, this is a direct result of lemma (\ref{L:Dirichlet}) above.
\end{proof}

\begin{Corollary}\label{C:RecipZetaFunction}
	The reciprocal Riemann zeta function has the property that
	\eqn{\frac{1}{\zeta(s)}=\sum_{n=0}^\infty\bigg[\sum_{k=0}^n P_k(n)\frac{1}{\zeta(k)}\bigg]P_n(s).}
\end{Corollary}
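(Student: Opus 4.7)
The plan is to apply Lemma \ref{L:Dirichlet} directly, with the Dirichlet series for $1/\zeta$ given by the Möbius function. First I would recall the classical Euler-product identity
\[
\frac{1}{\zeta(s)}=\sum_{n=1}^\infty \frac{\mu(n)}{n^s},
\]
valid (as a Dirichlet series) for $\textnormal{Re}(s)>1$, where $\mu$ is the Möbius function. Since $|\mu(n)|\leq 1$, the coefficient sequence $\{\mu(n)\}_{n=1}^\infty$ is in $\ell^\infty$, so that the function $f(x)=\sum_{n=1}^\infty \mu(n)e^{-nx}$ falls under the hypothesis of Proposition \ref{P:MellinTransform}, and $g(s)=1/\zeta(s)$ is a Dirichlet series in the sense required by Lemma \ref{L:Dirichlet}.

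Next I would verify that the sample values $\{1/\zeta(k)\}_{k=0}^\infty$ are well-defined. For $k\geq 2$, $\zeta(k)$ is a finite, strictly positive real number, so $1/\zeta(k)$ is defined. At $k=0$ one has $\zeta(0)=-1/2$, so $1/\zeta(0)=-2$. The only subtlety is $k=1$, where $\zeta$ has a simple pole; interpreting this as $1/\zeta(1)=0$ (which is consistent with the analytic continuation of $1/\zeta$ through $s=1$, and also with the convergence $\sum_{n=1}^\infty \mu(n)/n=0$ equivalent to the prime number theorem) makes the full sequence $\{1/\zeta(k)\}_{k=0}^\infty$ well-defined.

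Finally, applying Lemma \ref{L:Dirichlet} to $g(s)=1/\zeta(s)$ yields
\[
\frac{1}{\zeta(s)}=\sum_{n=0}^\infty\bigg[\sum_{k=0}^n P_k(n)\frac{1}{\zeta(k)}\bigg]P_n(s),
\]
as desired. The main obstacle is the conceptual point about the pole at $s=1$: one must justify that the value $1/\zeta(1)=0$ is the correct sample to feed into the interpolation. Once that is granted, everything reduces to citing the previous lemma, and no further computation is needed.
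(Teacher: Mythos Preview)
Your proposal is correct and follows essentially the same approach as the paper: both invoke the M\"obius-function Dirichlet series $\frac{1}{\zeta(s)}=\sum_{n=1}^\infty \frac{\mu(n)}{n^s}$ and then appeal to Lemma~\ref{L:Dirichlet}. Your additional care in verifying that the sample values $\{1/\zeta(k)\}_{k=0}^\infty$ are well-defined---in particular your handling of the pole at $k=1$ via $1/\zeta(1)=0$---is a detail the paper's one-line proof leaves implicit.
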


\begin{proof}
	Since $\frac{1}{\zeta(s)}=\sum_{n=1}^\infty \frac{\mu(n)}{n^s}$, where $\mu(n)$ is the M\"obius function, this proof is the same as the first part of lemma (\ref{L:Dirichlet}) above.
\end{proof}

\begin{Corollary}\label{C:ZetaFunction}
	The Riemann zeta function has the property that
	\eqn{\zeta(s+1+\epsilon)=\sum_{n=0}^\infty\bigg[\sum_{k=0}^n P_k(n)\zeta(k+1+\epsilon)\bigg]P_n(s).}
	for all $\epsilon>0$.
\end{Corollary}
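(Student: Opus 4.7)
The plan is to apply Lemma~\ref{L:Dirichlet} directly to the shifted function $g(s) = \zeta(s+1+\epsilon)$. The whole point of the shift is to dodge the pole at $s=1$ that forced Theorem~\ref{T:ZetaFunction} to detour through the Dirichlet eta function: by translating the argument by $1+\epsilon$, every integer evaluation point $k+1+\epsilon$ lies strictly to the right of the pole, so $g(k)$ is finite for every $k \in \N \cup \{0\}$.

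First I would rewrite $\zeta(s+1+\epsilon)$ as an honest Dirichlet series in the variable $s$:
$$\zeta(s+1+\epsilon)=\sum_{n=1}^\infty \frac{1}{n^{s+1+\epsilon}}=\sum_{n=1}^\infty \frac{a_n}{n^s}, \qquad a_n := n^{-1-\epsilon}.$$
Since $\epsilon>0$ the coefficients satisfy $|a_n|\leq 1$, so $\{a_n\}\in \ell^\infty$, matching the hypothesis imposed in Proposition~\ref{P:MellinTransform} (and inherited by Lemma~\ref{L:Dirichlet}).

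Next I would verify the remaining hypothesis of Lemma~\ref{L:Dirichlet}, namely that $\{g(k)\}_{k=0}^\infty$ is defined. For each integer $k\geq 0$ we have $k+1+\epsilon > 1$, so $\zeta(k+1+\epsilon)=\sum_{m=1}^\infty m^{-(k+1+\epsilon)}$ is an absolutely convergent series and hence a finite, well-defined number. Applying Lemma~\ref{L:Dirichlet} then yields
$$g(s)=\sum_{n=0}^\infty\bigg[\sum_{k=0}^n P_k(n)\,g(k)\bigg]P_n(s),$$
which, after substituting $g(s)=\zeta(s+1+\epsilon)$ and $g(k)=\zeta(k+1+\epsilon)$, is exactly the claimed identity.

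There is no substantive obstacle here: the corollary is essentially a clean specialization of Lemma~\ref{L:Dirichlet}, and the only thing to be careful about is that the shift by $1+\epsilon$ (rather than merely by $1$) is what keeps every sampling node $k+1+\epsilon$ uniformly bounded away from the pole of $\zeta$. If one wished, the natural place for further work would be to analyze the behavior as $\epsilon\to 0^+$, but that is outside the scope of the stated corollary.
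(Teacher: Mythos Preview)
Your proof is correct and follows exactly the route the paper intends: view $\zeta(s+1+\epsilon)$ as the Dirichlet series $\sum_{n\ge 1} n^{-1-\epsilon}/n^{s}$ with bounded coefficients and apply Lemma~\ref{L:Dirichlet}. The paper's own proof is the one-line remark that this is ``equivalent to'' Lemma~\ref{L:Dirichlet}; you have simply spelled out the verification that the hypotheses (bounded coefficients and well-defined integer samples) are met.
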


\begin{proof}
	Since $\zeta(s)=\sum_{n=1}^\infty \frac{1}{n^s}$, the proof is equivalent to that of lemma (\ref{L:Dirichlet}).
\end{proof}

\subsection{Fractional Fourier transform}

\begin{Proposition}[Alternate fractional Fourier transform]\label{P:AltFrFT}
\eqnn{\mathcal{F}^\alpha&=\frac{\sin(\pi\alpha)}{4}\bigg[\cot\Big(\frac{\pi}{4}\alpha\Big)\mathcal{F}^0-\cot\Big(\frac{\pi}{4}(\alpha-1)\Big)\mathcal{F}-\tan\Big(\frac{\pi}{4}\alpha\Big)\mathcal{F}^2+\tan\Big(\frac{\pi}{4}(\alpha-1)\Big)\mathcal{F}^3\bigg]\\
\mathcal{F}^\alpha[f](x)&=\frac{\sin(\pi\alpha)}{4}\bigg[\cot\Big(\frac{\pi}{4}\alpha\Big)f(x)-\cot\Big(\frac{\pi}{4}(\alpha-1)\Big)\hat{f}(x)-\tan\Big(\frac{\pi}{4}\alpha\Big)f(-x)+\tan\Big(\frac{\pi}{4}(\alpha-1)\Big)\hat{f}(-x)\bigg].}
In this manner, the fractional Fourier transform can be completely determined by knowing only the function and its Fourier transform.
\end{Proposition}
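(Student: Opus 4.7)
The plan is to apply Theorem \ref{T:PeriodicOperator} directly to $T = \mathcal{F}$, using the classical identity $\mathcal{F}^4 = \mathrm{Id}$. This makes the sequence $\{\mathcal{F}^n\}_{n\in\Z}$ periodic with period $N = 4$, so Proposition \ref{P:ShannonPeriod} applies with $c_n = \mathcal{F}^n$. Since $N = 4$ is even, this immediately gives
$$\mathcal{F}^\alpha = \frac{1}{4}\sum_{n=0}^{3} \sin\bigl(\pi(\alpha-n)\bigr)\cot\bigl(\tfrac{\pi}{4}(\alpha-n)\bigr)\mathcal{F}^n.$$

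The next step is a purely trigonometric reduction. Using $\sin(\pi(\alpha-n)) = (-1)^n \sin(\pi\alpha)$, factor $\sin(\pi\alpha)/4$ out in front. For the $n=2$ and $n=3$ terms, apply the co-function shift $\cot(\theta - \pi/2) = -\tan(\theta)$ to obtain
$$\cot\bigl(\tfrac{\pi}{4}(\alpha-2)\bigr) = -\tan\bigl(\tfrac{\pi}{4}\alpha\bigr), \qquad \cot\bigl(\tfrac{\pi}{4}(\alpha-3)\bigr) = -\tan\bigl(\tfrac{\pi}{4}(\alpha-1)\bigr).$$
Combining these with the $(-1)^n$ signs from the sines yields exactly the four displayed coefficients of the first line of the proposition.

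For the second (pointwise) formula, substitute the well-known values $\mathcal{F}^0[f](x) = f(x)$, $\mathcal{F}^2[f](x) = f(-x)$, and $\mathcal{F}^3[f](x) = \widehat{\mathcal{F}^2[f]}(x) = \hat{f}(-x)$ (the latter via $\mathcal{F}^2$ commuting with $\mathcal{F}$). Substituting these four functional identities into the operator identity above produces the stated pointwise expression.

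The main obstacle is essentially bookkeeping — keeping the parity signs and the quarter-period phase shifts straight. One mild subtlety worth noting is that Theorem \ref{T:PeriodicOperator} is phrased under the hypothesis $T^N = T$, whereas the working hypothesis for $\mathcal{F}$ is $\mathcal{F}^4 = \mathrm{Id}$; however, what is actually invoked in the proof is the periodicity of the sequence $\{T^n\}$, and $\mathcal{F}^{n+4} = \mathcal{F}^n$ holds for all $n\in\Z$, so Proposition \ref{P:ShannonPeriod} applies without modification.
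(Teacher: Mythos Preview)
Your proposal is correct and follows exactly the paper's approach: the paper's own proof is the single sentence ``This is a direct result of theorem (\ref{T:PeriodicOperator}),'' and you have simply spelled out the trigonometric bookkeeping that turns the $N=4$ case of that theorem into the displayed formula. Your observation about the discrepancy between the stated hypothesis $T^N=T$ in Theorem~\ref{T:PeriodicOperator} and the actual periodicity $\mathcal{F}^4=\mathrm{Id}$ is apt; as you note, what is really invoked is Proposition~\ref{P:ShannonPeriod} for the period-$4$ sequence $\{\mathcal{F}^n\}_{n\in\Z}$, and the paper's own use of Theorem~\ref{T:PeriodicOperator} in Lemma~\ref{L:DTrig1} (where $D^4[\sin]=\sin$, i.e.\ period $4$, is the working hypothesis) confirms this reading.
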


\begin{proof}
This is a direct result of theorem (\ref{T:PeriodicOperator}).
\end{proof}

We note that this definition of a fractional Fourier transform does not agree with that in the literature, namely that
\begin{Definition}[Literature fractional Fourier transform]
\eqn{\mathcal{F}_\alpha[f](u)=\sqrt{1-i\cot(\alpha)}e^{i\pi\cot(\alpha)u^2}\int_{\R}e^{-i2\pi\big(\csc(\alpha)ux-\frac{\cot(\alpha)}{2}x^2\big)}f(x)dx}
where $\alpha\in[0,2\pi)$ is considered as a rotation angle in the two-dimensional time-frequency domain.
\end{Definition}

It must be noted, however, that the alternate fractional Fourier transform has some advantages to that in the literature. One of these comes in ease of calculation, as shown above, since knowing only the function and its Fourier transform is enough to calculate the fractional Fourier transform. Further, it retains the properties of additivity, linearity, integer orders, inverses, commutativity, associativity, unitarity, and time reversal (\emph{i.e.} it commutes with the involution).

Below you will find a figure comparing the two definitions of the fractional Fourier transform on the function $f(x)=\chi_{[-1/2,1/2]}(x)$ such that $\hat{f}(x)=\sinc(x)$.

\begin{figure}[h!]
  \centering
  \begin{minipage}[b]{0.19\textwidth}
    \includegraphics[width=\textwidth]{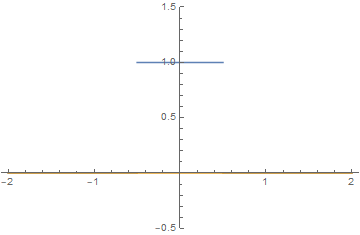}
  \end{minipage}
	\hfill
  \begin{minipage}[b]{0.19\textwidth}
    \includegraphics[width=\textwidth]{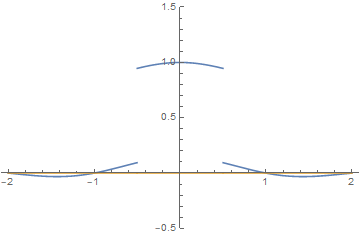}
  \end{minipage}
	\hfill
  \begin{minipage}[b]{0.19\textwidth}
    \includegraphics[width=\textwidth]{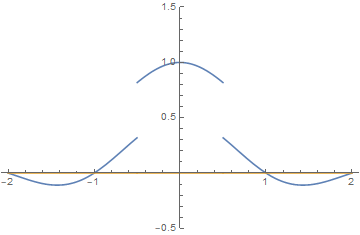}
  \end{minipage}
	\hfill
  \begin{minipage}[b]{0.19\textwidth}
    \includegraphics[width=\textwidth]{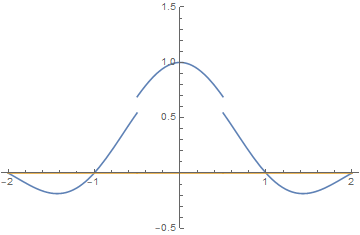}
  \end{minipage}
	\hfill
  \begin{minipage}[b]{0.19\textwidth}
    \includegraphics[width=\textwidth]{FrFTfull.png}
  \end{minipage}
\caption{The alternate fractional Fourier transform, $\mathcal{F}^\alpha[\chi_{[-1/2,1/2]}]$ for $\alpha=0,\frac{1}{4},\frac{1}{2},\frac{3}{4},1$. The blue curve represents the real part, while the orange curve represents the imaginary part.}
\end{figure}

\begin{figure}[h!]
  \centering
  \begin{minipage}[b]{0.19\textwidth}
    \includegraphics[width=\textwidth]{bFrFTnone.png}
  \end{minipage}
	\hfill
  \begin{minipage}[b]{0.19\textwidth}
    \includegraphics[width=\textwidth]{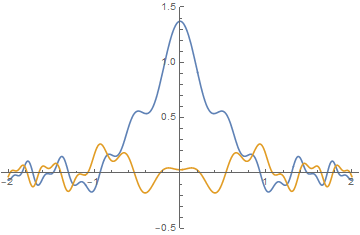}
  \end{minipage}
	\hfill
  \begin{minipage}[b]{0.19\textwidth}
    \includegraphics[width=\textwidth]{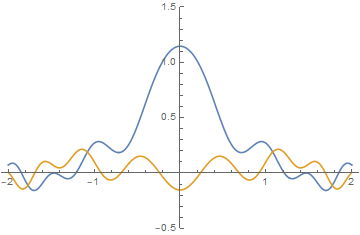}
  \end{minipage}
	\hfill
  \begin{minipage}[b]{0.19\textwidth}
    \includegraphics[width=\textwidth]{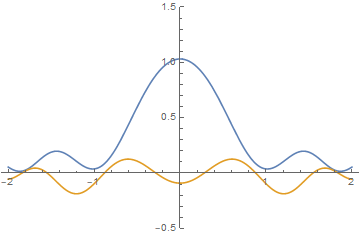}
  \end{minipage}
	\hfill
  \begin{minipage}[b]{0.19\textwidth}
    \includegraphics[width=\textwidth]{FrFTfull.png}
  \end{minipage}
\caption{The literature fractional Fourier transform, $\mathcal{F}_\alpha[\chi_{[-1/2,1/2]}]$ for $\alpha=0,\frac{\pi}{8},\frac{\pi}{4},\frac{3\pi}{8},\frac{\pi}{2}$. The blue curve represents the real part, while the orange curve represents the imaginary part.}
\end{figure}
Indeed, it is quickly seen that the literature fractional Fourier transform has smoothing properties for all $\alpha\notin\Z$, while the alternate fractional Fourier transform has smoothing properties for only some $\alpha\in\Z$. However, there remains an elegance to the alternate fractional Fourier transform that is most readily seen for small values of $\alpha$. Observe the following figure, comparing the $\frac{1}{50}^{th}$ power of the Fourier operators.
\newpage
\begin{figure}[h!]
  \centering
  \begin{minipage}[b]{0.49\textwidth}
    \includegraphics[width=\textwidth]{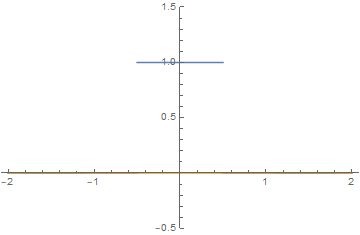}
  \end{minipage}
	\hfill
  \begin{minipage}[b]{0.49\textwidth}
    \includegraphics[width=\textwidth]{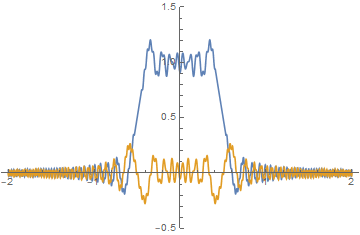}
  \end{minipage}
\caption{The alternate fractional Fourier transform $\mathcal{F}^{\frac{1}{50}}[\chi_{[-1/2,1/2]}]$ on the left, compared to the literature fractional Fourier transform, $\mathcal{F}_{\frac{\pi}{100}}[\chi_{[-1/2,1/2]}]$ on the right. The blue curve represents the real part, while the orange curve represents the imaginary part.}
\end{figure}

Because of the smoothing properties of the literature fractional Fourier transform, continuity of the operator power forces that non-local frequencies tend to infinity as the power tends to zero, as the image needs to approximate a non-smooth function by a smooth one. Since the alternate fractional Fourier transform is simply a linear combination of a function, its Fourier transform, and an involution of both, it maintains the locality of smoothness.

\subsection{Bad cases}

\begin{Remark}
	Let $T_t:L^p(\R)\to L^p(\R)$ be the forward translation operator such that $T_t[f](x)=f(x-t)$. Then
	\eqn{T_t\neq (T_1)^t=\sum_{n\in\Z}\sinc(t-n)(T_1)^n=\sum_{n\in\Z}\sinc(t-n)T_n.}
\end{Remark}

\begin{proof}
	Consider the function $\chi_{[0,1/2]}\in L^p(\R)$. Then $T_n[\chi_{[0,1/2]}](x)=\chi_{[0,1/2]}(x-n)=\chi_{[n,n+1/2]}(x)$. Hence
	\eqn{\text{supp }(T_1)^t[\chi_{[0,1/2]}](x)\cap(1/2,1)=\emptyset}
	for all $t\in\R$. However, it is simple to see that
	\eqn{\text{supp }T_{1/2}[\chi_{[0,1/2]}](x)\cap(1/2,1)=(1/2,1).}
	Therefore $T_t\neq(T_1)^t$.
\end{proof}

\begin{Proposition}\label{P:Translation}
	Let $T_t:L^p(\R)\to L^p(\R)$ be the forward translation operator such that $T_t[f](x)=f(x-t)$. Then
	\eqn{T_t= \lim_{k\to\infty}(T_{1/k})^{kt}=\lim_{k\to\infty}\sum_{n\in\Z}\sinc(kt-n)(T_{1/k})^n=\lim_{k\to\infty}\sum_{n\in\Z}\sinc(kt-n)T_{n/k}.}
\end{Proposition}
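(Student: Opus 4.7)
The plan is to apply Theorem~\ref{T:ShannonOperator} to the translation $T_{1/k}$ (which is unitary on $L^2(\R)$); this immediately yields the two inner equalities of the proposition and reduces the content to proving the limit $(T_{1/k})^{kt}\to T_t$ as $k\to\infty$. I will establish this convergence by a Fourier-multiplier calculation combined with dominated convergence.

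Set $S_k:=\sum_{n\in\Z}\sinc(kt-n)T_{n/k}$ and, using $\widehat{T_s f}(\xi)=e^{-2\pi is\xi}\hat f(\xi)$, observe that $\widehat{S_k f}(\xi)=m_k(\xi)\hat f(\xi)$ where
$$m_k(\xi)=\sum_{n\in\Z}\sinc(kt-n)e^{-2\pi in\xi/k}.$$
Now apply Proposition~\ref{P:ShannonUnit} with $x=e^{-2\pi i\xi/k}$ (which has modulus one) and $\alpha=kt$: the proposition gives $m_k(\xi)=(e^{-2\pi i\xi/k})^{kt}$ evaluated on the principal branch. When $|\xi|\leq k/2$ the principal logarithm of $e^{-2\pi i\xi/k}$ equals $-2\pi i\xi/k$, so $m_k(\xi)=e^{-2\pi it\xi}$; when $|\xi|>k/2$ the principal branch produces an aliased frequency $\xi^\ast\in(-k/2,k/2]$ congruent to $\xi$ modulo $k$, so $m_k(\xi)=e^{-2\pi it\xi^\ast}$. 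In either case $|m_k(\xi)|=1$.

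To conclude: for each fixed $\xi\in\R$ one eventually has $|\xi|\leq k/2$, hence $m_k(\xi)\to e^{-2\pi it\xi}$ pointwise as $k\to\infty$; and $|m_k(\xi)\hat f(\xi)|\leq|\hat f(\xi)|\in L^2(\R)$ uniformly in $k$. Dominated convergence on the Fourier side gives $\widehat{S_k f}\to\widehat{T_t f}$ in $L^2(\R)$, and Plancherel transfers the convergence to $S_k f\to T_t f$ in $L^2$. For $p\neq 2$ one first verifies the identity $S_k f=T_t f$ exactly on Schwartz functions whose Fourier support lies in $[-k/2,k/2]$ (a direct consequence of Theorem~\ref{T:Shannon} applied to $s\mapsto f(x-s)$ for each fixed $x$), and then extends to arbitrary $f\in L^p(\R)$ by density combined with the uniform boundedness of $S_k$ on such a dense class.

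The crux is the aliasing identification that yields $|m_k|\equiv 1$: the principal-branch subtlety in Proposition~\ref{P:ShannonUnit} is precisely the reason why $(T_1)^t\neq T_t$ in the preceding Remark, yet also why the discrepancy vanishes in the limit $k\to\infty$, since the aliased interval $(-k/2,k/2]$ eventually absorbs any fixed frequency. Once the multiplier computation and uniform bound are in place, the convergence statement follows immediately.
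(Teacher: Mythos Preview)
Your approach is genuinely different from the paper's. The paper gives a one-line formal argument: it regards $\sum_{n\in\Z}\sinc(kt-n)T_{n/k}$ as a Riemann sum (spacing $1/k$, nodes $x_n=n/k$) for the operator-valued integral $\int_\R k\sinc\big(k(t-x)\big)T_x\,dx$, and then lets $k\sinc(k(\cdot-t))\to\delta(\cdot-t)$ to obtain $\int_\R\delta(x-t)T_x\,dx=T_t$. Your route---identify the Fourier multiplier $m_k$, recognise the principal-branch aliasing, and pass to the limit by dominated convergence on the Fourier side---is more explicit and, for $p=2$, fully rigorous; it also explains transparently \emph{why} the failure recorded in the preceding Remark is cured in the limit (any fixed frequency eventually sits in the unaliased window $(-k/2,k/2]$). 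What the paper's argument buys is brevity and a formulation that does not single out $p=2$; what yours buys is an honest proof in the Hilbert case and a clear mechanism.

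There is, however, a gap in your $L^p$ extension. To pass from convergence on a dense class to convergence on all of $L^p$ you need $\sup_k\|S_k\|_{L^p\to L^p}<\infty$, i.e.\ uniform boundedness of the operators themselves, not merely ``uniform boundedness of $S_k$ on such a dense class''. Your multiplier satisfies $|m_k|\equiv1$, but for $p\neq2$ that alone does not give $L^p$-boundedness: $m_k$ is a step function on the intervals $((j-\tfrac12)k,(j+\tfrac12)k]$ with unimodular jumps $e^{2\pi i tjk}$, and nothing you have written controls its $M_p$-norm, let alone uniformly in $k$. (Indeed, it is not even clear that the defining series for $S_k f$ converges in $L^p$ for general $f$.) Either supply a uniform multiplier estimate, or---closer in spirit to the paper---argue directly that $S_k f\to T_t f$ in $L^p$ for band-limited $f$ (where eventually $S_k f=T_t f$ exactly) and then use strong continuity of translations plus an approximation of arbitrary $f\in L^p$ by band-limited functions, keeping track of the error with the actual operator $T_t$ rather than with $S_k$.
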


\begin{proof}
	\eqn{\lim_{k\to\infty}\sum_{n\in\Z}\sinc(kt-n)T_{n/k}=\int_\R \delta(x-t)T_x dx=T_t}
	by the definition of a Bochner integral as a uniform limit of operator-valued Riemann sums.
\end{proof}

\begin{Proposition}\label{P:Semigroup}
	Let $e^{A t}$ be a unitary $C_0$-semigroup. Then
	\eqn{e^{At}= \lim_{k\to\infty}(e^{\frac{A}{k}})^{kt}=\lim_{k\to\infty}\sum_{n\in\Z}\sinc(kt-n)(e^{\frac{A}{k}})^n=\lim_{k\to\infty}\sum_{n\in\Z}\sinc(kt-n)e^{A\frac{n}{k}}.}
\end{Proposition}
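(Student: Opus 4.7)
My plan is to reduce the proposition to Theorem \ref{T:ShannonOperator} applied to the family $\{e^{A/k}\}_{k\in\N}$ and then repeat the Riemann-sum-to-delta-integral argument used in the proof of Proposition \ref{P:Translation}. Since $e^{At}$ is a unitary $C_0$-semigroup, the operator $e^{A/k}$ is itself unitary for every $k\in\N$; applying Theorem \ref{T:ShannonOperator} with $T = e^{A/k}$ and exponent $\alpha = kt$ immediately produces the two rightmost equalities at every finite $k$, so the only nontrivial content is the first equality $e^{At} = \lim_{k\to\infty}(e^{A/k})^{kt}$.

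To obtain it, I would rewrite the series via the substitution $\tau_n = n/k$ as
\eqn{\sum_{n\in\Z}\sinc(kt-n)\, e^{An/k} = \frac{1}{k}\sum_{n\in\Z} k\,\sinc\bigl(k(t-n/k)\bigr)\, e^{A(n/k)},}
which is an operator-valued Riemann sum of mesh $\Delta\tau = 1/k$ for the Bochner integral
\eqn{\int_\R k\,\sinc(k(t-\tau))\, e^{A\tau}\, d\tau.}
Because $k\,\sinc(k\,\cdot\,)$ is a classical Dirac sequence at the origin and $\tau\mapsto e^{A\tau}$ is a strongly continuous family of unitaries, letting $k\to\infty$ formally collapses the integrand to $\delta(t-\tau)\,e^{A\tau}$ and yields $e^{At}$. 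This is precisely the concluding identification used at the end of the proof of Proposition \ref{P:Translation}.

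The main obstacle is rigorously justifying this limit when $A$ is unbounded, since norm convergence of the Riemann sums is then unavailable and one must argue in the strong operator topology. The remedy is to evaluate at a fixed vector $\psi$: the orbit $\tau\mapsto e^{A\tau}\psi$ is norm-continuous with $\|e^{A\tau}\psi\| = \|\psi\|$, which makes the Riemann sum a bona fide approximation of a vector-valued Bochner integral, and the Dirac-sequence property then gives $(e^{A/k})^{kt}\psi\to e^{At}\psi$ in norm. Since $\|(e^{A/k})^{kt}\| = 1 = \|e^{At}\|$ uniformly in $k$, no additional uniformity issue obstructs the passage to the limit, and strong-operator convergence follows for every $\psi$ in the underlying Hilbert space.
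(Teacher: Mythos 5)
Your proposal is correct in spirit and takes essentially the same route as the paper: the paper's entire proof is the one-line identification $\lim_{k\to\infty}\sum_{n\in\Z}\sinc(kt-n)e^{A\frac{n}{k}}=\int_\R\delta(x-t)e^{Ax}\,dx=e^{At}$ ``by the definition of a Bochner integral as a uniform limit of operator-valued Riemann sums,'' which is precisely your Riemann-sum-to-delta argument. You in fact supply strictly more justification than the paper does --- the explicit reduction of the two rightmost equalities to Theorem~\ref{T:ShannonOperator} with $T=e^{A/k}$, and the passage to the strong operator topology for unbounded $A$ --- though both your argument and the paper's share the unaddressed caveat that $k\,\sinc(k\,\cdot)$ is not an $L^1$-bounded approximate identity (only a conditionally convergent one), so the final limit really rests on Fourier-localization rather than a classical Dirac-sequence estimate.
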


\begin{proof}
	\eqn{\lim_{k\to\infty}\sum_{n\in\Z}\sinc(kt-n)e^{A\frac{n}{k}}=\int_\R \delta(x-t)e^{Ax} dx=e^{At}}
	by the definition of a Bochner integral as a uniform limit of operator-valued Riemann sums.
\end{proof}

\section{Conclusion and further research}

Though not defined for the entire class of operator which the functional calculus allows, analytic interpolation of the integer powers of operators can produce fractional powers of operators which agree with that of the functional calculus in many cases. Such interpolations allow one to approximate fractional powers by finite sums of integer powers of operators, and thus may find much use in numerical analysis of fractional PDEs and the time-frequency analysis of the fractional Fourier transform. Further, these results provide an interpolation procedure for the Riemann zeta function. It is hoped that these results may shine light on a solution to the persistently elusive Riemann hypothesis.


\begin{thebibliography}{99}
		\normalsize
		
		
		\bibitem{Diethelm}
		K. Diethelm and N.J. Ford,
		Analysis of fractional differential equations.
		\emph{Journal of Mathematical Analysis and Applications} \textbf{265}, No 2 (2002), 229--248.
		
		\bibitem{Hilfer1}
		R. Hilfer,
		\emph{Applications of fractional calculus in physics}.
		World Scientific (2000).

		\bibitem{Barbosa}
		R.S. Babosa, J.A.T. Machado, B.M. Vinagre, and A.J. Calderon,
		Analysis of the van der pol oscillator containing derivatives of fractional order.
		\emph{Journal of Vibration and Control} \textbf{13}, Nos 9-10 (2007), 1291--1301.

		\bibitem{Almeida}
		L.B. Almeida,
		The fractional Fourier transform and time-frequency representations.
		\emph{IEEE Trans. Signal Process.} \textbf{42} No 11 (1994), 3084--3091.

		\bibitem{OzaktasEtAl}
		H. M. Ozaktas, Z. Zalevsky, M.A. Kutay,
		\emph{The Fractional Fourier Transform with Applications in Optics and Signal Processing}. John Wiley \& Sons (2001).

		\bibitem{Soo-Chang}
		S.-C. Pei, J.-J. Ding,
		Relations between fractional operations and time-frequency distributions, and their applications.
		\emph{IEEE Trans. Signal Process.} \textbf{49} No 8 (2001), 1638--1655.
		
		\bibitem{Hormander}
		L. Hörmander,
		\emph{The analysis of linear partial differential operators III: pseudo-differential operators.}
		Springer (1987).

		\bibitem{Herzog}
		D.P. Herzog and N. Totz,
		An extension of H\"ormanders hypoellipticity theorem.
		\emph{Potential Anal.} \textbf{42} (2015), 403--433.

		\bibitem{Titchmarsh}
		E.C. Titchmarsh.
		\emph{The theory of the Riemann zeta-function.}
		Oxford University Press (1986).

		\bibitem{Hille}
		E. Hille, R.S. Phillips.
		\emph{Functional analysis and semi-groups.}
		American Mathematical Society (1975).

		\bibitem{Balakrishnan}
		A.V. Balakrishnan,
		Fractional powers of closed operators and the semigroups generated by them.
		\emph{Pacific journal of mathematics} \textbf{10}, No 2 (1960), 419--437.

		\bibitem{Komatsu}
		H. Komatsu,
		Fractional powers of operators.
		\emph{Pacific journal of mathetmatics} \textbf{19}, No 2 (1966), 285--346.
	
		\bibitem{PrussAndSohr}
		J. Pr\"uss and H. Sohr,
		On operators with bounded imaginary powers in Banach spaces.
		\emph{Mathematische Zeitschrift} \textbf{203}, (1990), 429--452.
	
		\bibitem{BoasBuck}
		R.P. Boas Jr. and R.C. Buck,
		\emph{Ergebnisse der Mathematik und ihrer Grenzgebiete Bd. 19: polynomial expansions of analytic functions.}
		Springer (1958).

		\bibitem{Klopfenstein}
		K.F. Klopfenstein,
		A note on Hilbert spaces of factorial series.
		\emph{Indiana University Mathematics Journal} \textbf{25} No 11 (1976), 1073--1081.
	
		\bibitem{Whittaker}
		J.M. Whittaker,
		On the functions which are represented by the expansions of the interpolation theory.
		\emph{Proc. Royal Soc. Edinburgh} \textbf{35} (1915), 181--194 .
	
\end{thebibliography}
\end{document}